\newcommand{\false}{\textup{\texttt{false}}}
\newcommand{\mfd}{\mathcal{M}}
\newcommand{\normaliz}{Normaliz}
\newcommand{\python}{Python}
\newcommand{\recogniser}{3-Manifold Recogniser}
\newcommand{\rpg}{\mathscr{P}_1(\mfd)}
\newcommand{\regina}{Regina}
\newcommand{\snappea}{SnapPea}
\newcommand{\snappy}{SnapPy}
\newcommand{\tri}{\mathcal{T}}
\newcommand{\true}{\textup{\texttt{true}}}
\newcommand{\R}{\mathbb{R}}
\newcommand{\Z}{\mathbb{Z}}
\newcommand{\menub}[2]{\textit{#1}\,$\longrightarrow$\,\textit{#2}}
\newcommand{\cpp}{C%
    \nolinebreak\hspace{-.05em}\raisebox{.4ex}{\tiny\bf +}%
    \nolinebreak\hspace{-.10em}\raisebox{.4ex}{\tiny\bf +}}
\newtheorem{theorem}{Theorem}[section]
\newtheorem{lemma}[theorem]{Lemma}
\newtheorem{algorithm}[theorem]{Algorithm}
\theoremstyle{definition}
\newtheorem{defn}[theorem]{Definition}
\begin{document}

\title[Computational topology with Regina]
    {Computational topology with Regina: \\
     Algorithms, heuristics and implementations}
\author{Benjamin A.\ Burton}
\address{School of Mathematics and Physics \\
    The University of Queensland \\
    Brisbane QLD 4072 \\
    Australia}
\email{bab@maths.uq.edu.au}
\thanks{The author is supported by the Australian Research Council
    under the Discovery Projects funding scheme (projects DP1094516
    and DP110101104).}
\dedicatory{Dedicated to J.\ Hyam Rubinstein}
\subjclass[2000]{%
    Primary
    57-04, 
    57N10; 
    Secondary
    68W05, 
    57Q15} 
\keywords{3-manifolds, algorithms, software, simplification,
    normal surfaces, recognition, angle structures}

\begin{abstract}
    Regina is a software package for studying 3-manifold triangulations
    and normal surfaces.  It includes a graphical user interface and
    Python bindings, and also supports angle structures, census
    enumeration, combinatorial recognition of triangulations, and
    high-level functions such as 3-sphere recognition, unknot
    recognition and connected sum decomposition.

    This paper brings 3-manifold topologists
    up-to-date with \emph{Regina} as it appears today,
    and documents for the first time in the literature
    some of the key algorithms, heuristics and implementations that are
    central to \emph{Regina}'s performance.
    These include the all-important simplification heuristics,
    key choices of data structures and algorithms to
    alleviate bottlenecks in normal surface enumeration, modern
    implementations of 3-sphere recognition and connected sum decomposition,
    and more.
    We also give some historical background for the project,
    including the key role played by Rubinstein in its genesis 15 years ago,
    and discuss current directions for future development.
\end{abstract}

\maketitle


\section{Introduction}

Algorithmic problems run to the heart of low-dimensional topology.
Prominent amongst these are \emph{decision problems}
(e.g., recognising the unknot, or testing whether two triangulated
manifolds are homeomorphic);
\emph{decomposition problems} (e.g., decomposing a triangulated manifold
into a connected sum of prime manifolds);
and \emph{recognition problems} (e.g., ``naming'' the manifold described by
a given triangulation).

For 3-manifolds in particular, such problems typically have highly complex and
expensive solutions---running times are often exponential or
super-exponential, and implementations are often major endeavours
developed over many years (if they exist at all).
This is in contrast to dimension two, where many such problems are
easily solved in small polynomial time, and dimensions $\geq 4$,
where such problems can become provably undecidable
\cite{chiodo11-groups,markov60-insolubility}.

{\regina} \cite{burton04-regina,regina} is a software package for
low-dimensional topologists and knot theorists.  Its main focus is on
triangulated 3-manifolds, though it is now branching into both two and
four dimensions also (see Section~\ref{s-future}).
It provides powerful algorithms and heuristics to assist
with decision, decomposition and recognition problems; more broadly,
it includes a range of facilities to study and
manipulate 3-manifold triangulations.
It offers rich support for normal surface theory, a major
algorithmic framework that recurs throughout 3-manifold topology.

One of {\regina}'s most important uses is for \emph{experimentation}:
it can analyse a single example too large to
study by hand, or millions of examples ``in bulk''.
Other uses include \emph{computer proofs},
such as the recent resolution of Thurston's old question
of whether the Weber-Seifert dodecahedral space is non-Haken \cite{burton12-ws},
and \emph{calculation} of previously-unknown topological invariants,
such as recent improvements to crosscap numbers in knot tables
\cite{burton11-hilbert,burton12-crosscap}.

The remainder of this introduction gives a brief overview of {\regina} and
some history behind its early development, back to its genesis with
Rubinstein 15 years ago.  Following this, we select some of {\regina}'s
most important features and study the modern heuristics, algorithms
and implementations behind them.  Some of these details are crucial for
{\regina}'s fast performance, but have not appeared in the literature to date.
Overall, this paper offers a fresh update on the significant progress
and performance enhancements in the eight years since {\regina}'s last
major report in the literature \cite{burton04-regina}.

The specific features that we focus on are:
\begin{itemize}
\item
the process of simplifying triangulations, which plays a critical role in many
of {\regina}'s high-level algorithms;
\item
the enumeration of normal and almost normal
surfaces, including the high-level 0-efficiency, 3-sphere recognition
and connected sum decomposition algorithms;
\item
combinatorial recognition, whereby {\regina} attempts to recognise a
triangulation by studying its combinatorial structure;
\item
the enumeration of angle structures, taut angle structures and veering
structures on triangulations.
\end{itemize}

With respect to these features, there are interesting new
developments in the pipeline.
Examples include ``breadth-first simplification'' of
triangulations, the tree traversal algorithm for enumerating
vertex normal surfaces, and new 0-efficiency and 3-sphere recognition
algorithms based on linear programming that exhibit experimental polynomial-time
behaviour.
The corresponding code is already written
(it is now being tested and prepared
for integration into {\regina}),
and we outline these new developments in the relevant
sections of this paper.

To finish, Section~\ref{s-exp} gives concrete examples
of how {\regina} can be used for experimentation,
including an introduction to {\regina}'s in-built {\python} scripting,
so that interested
readers can use these as launching points for their own experiments.
Section~\ref{s-future} concludes with
additional future directions for {\regina},
including new work with Ryan Budney on triangulated \emph{4-manifolds},
much of which is already coded and running in the development
repository.

\subsection{Overview of Regina}

{\regina} is now 13 years old, with over $190\,000$ lines of source code.
%
It is released under the GNU General Public License, and contributions
from the research community are welcome.
It adheres to the following broad development principles, in order of
precedence:
\begin{enumerate}
    \item \emph{Correctness:} Having correct output is critical,
    particularly for applications such as computer proofs.
    {\regina} is extremely conservative in this regard:
    for example, it will use arbitrary precision integer arithmetic
    if it cannot be proven unnecessary, and the API documentation
    makes thorough use of preconditions and postconditions.

    \item \emph{Generality:} Algorithms operate in the
    broadest possible scenarios (within reason), and do not
    require preconditions that cannot be easily tested.
    For instance, unknot recognition runs correctly for both bounded and
    ideal triangulations, and even when the input triangulation
    is not known to be a knot complement
    (whereupon it generalises to solid torus recognition).

    \item \emph{Speed:} Because many of its algorithms run in
    exponential or super-ex\-po\-nen\-tial time, speed is imperative.
    {\regina} makes use of sophisticated algorithms and heuristics that,
    whilst adhering to the constraints of correctness and generality,
    make it practical for real topological problems.
\end{enumerate}

Regina is multi-platform, and offers a drag-and-drop installer for MacOS,
an MSI-based installer for Windows, and ready-made packages for several
GNU/Linux distributions.
It is thoroughly documented, and stores its data files in a
compressed XML format.  Regina provides three levels of user interface:
\begin{itemize}
    \item a full graphical user interface, based on the Qt framework
    \cite{qt};
    \item a scripting interface based on {\python}, which can interact
    with the graphical interface or be used a stand\-alone {\python} module;
    \item a programmers' interface offering native access to
    {\regina}'s mathematical core through a {\cpp} shared library.
\end{itemize}

There are facilities to help new users learn their way around,
including an illustrated users' handbook, context-sensitive
``what's this?''\ help, and sample data files that can be opened through
the \menub{File}{Open Example} menu.

{\regina}'s core strengths are in working with triangulations, normal surfaces
and angle structures.  It only offers basic support for hyperbolic geometries,
for which the software packages {\snappea}~\cite{snappea} and its successor
{\snappy}~\cite{snappy} are more suitable.
In addition to its range of low-level operations and 3-manifold invariants,
{\regina} includes implementations of high-level decision and decomposition
algorithms, including the only known full implementations of
3-sphere recognition and connected sum decomposition.
For a comprehensive list of features, see
\url{http://regina.sourceforge.net/docs/featureset.html}.

\subsection{History}

In the 1990s, Jeff Weeks' software package {\snappea} had opened up enormous
possibilities for computer experimentation with hyperbolic 3-mani\-folds.
In contrast, outside the hyperbolic world many fundamental 3-manifold
algorithms remained purely theoretical, including
high-profile algorithms such as 3-sphere recognition, connected sum
decomposition, Haken's unknot recognition algorithm, and testing for
incompressible surfaces.

More broadly, \emph{normal surface theory}---a ubiquitous technology in
algorithmic 3-manifold topology---had no concrete software implementation
(or at least none that had been publicised).  Many of the algorithms
that used normal surfaces
were so complex that it was believed that any attempt at
implementation would be both painstaking to develop and impractically slow
to run.

Nevertheless, Rubinstein sought to move these algorithms from the
theoretical realm to the practical, and in 1997 he assembled a team for
a new software project that would make \emph{computational}
normal surface theory a reality.
This initial team consisted of David Letscher, Richard Rannard, and myself.
Much planning was done and a little preliminary code was written, but then
the team members became occupied with different projects.

Letscher later resumed the project on his own,
and at a 1999 workshop on computational 3-manifold topology at
Oklahoma State University he presented \emph{Normal},
a Java-based application that could modify and simplify
triangulations, and enumerate vertex normal surfaces.
This was an exciting development, but the software was proof-of-concept
only, and later that month Letscher and I sat down to begin
again from scratch.  The successor would be a fast and robust software
package with a {\cpp} engine, would be designed for extensibility and
portability, and would support community-contributed ``add-ons''.

Letscher stayed with the project for the first year, and then moved on
to pursue different endeavours.  I remained with the project (a PhD
student at the time, with more time on my hands), and
the project---now known as \emph{Regina}---saw its first public release in
December~2000.
Development at this stage was taking place at Oklahoma State University,
where Bus Jaco was both a strong supporter and a significant influence
on the project in its formative years.

Around that time, a significant phase shift was taking place in normal
surface theory: Jaco and Rubinstein were developing their theory of
\emph{0-efficient triangulations}.
Their landmark 2003 paper
\cite{jaco03-0-efficiency} showed that key
problems such as 3-sphere recognition and connected sum decomposition
could be solved without the expensive and highly problematic operation of
\emph{cutting} along a normal surface, and that instead this could be
replaced by a \emph{crushing} operation that always reduces the number
of tetrahedra.  For the first time, the key algorithms of
3-sphere recognition and connected sum decomposition became both
feasible and practical to implement.
Shortly after, in March 2004, {\regina} became the first software package
to implement robust, conclusive solutions to these fundamental problems.

{\regina} has come a long way since development began in 1999.
It continues to grow in its mathematics, performance, usability,
and technical infrastructure, and it enjoys
contributions from a wide range of people (as noted in the following section).
For a list of highlights over the years, the reader is invited to peruse
the ``abbreviated changelog'' at
\url{http://regina.sourceforge.net/docs/history.html}.

\subsection{Acknowledgements}

{\regina} is a mature project and has benefited from the expertise of
many people and the resources of many organisations.

Ryan Budney and William Pettersson have both made significant and
ongoing contributions, and are both now primary developers for the project.
A host of other people have contributed either code or expertise,
including Bernard Blackham, Marc Culler, Dominique Devriese, Nathan Dunfield,
Matthias Goerner, William Jaco, David Letscher, Craig Macintyre, Melih Ozlen,
Hyam Rubinstein, Jonathan Shewchuk, and Stephan Tillmann.
In addition, the open-source software libraries {\normaliz}
\cite{bruns10-normaliz} (by Winfried Bruns, Bogdan Ichim and Christof Soeger)
and {\snappea} \cite{snappea} (by Jeff Weeks) have been grafted directly into
{\regina}'s calculation engine.

Organisations that have contributed to the development of {\regina} include
the American Institute of Mathematics,
the Australian Research Council (Discovery Projects DP0208490, DP1094516
and DP110101104),
the Institute for the Physics and Mathematics of the Universe
(University of Tokyo),
Oklahoma State University,
the Queensland Cyber Infrastructure Foundation,
RMIT University,
the University of Melbourne,
The University of Queensland,
the University of Victoria (Canada),
and the Victorian Partnership for Advanced Computing (Australia).


\section{Simplifying triangulations} \label{s-simp}

The most basic object in {\regina} is a \emph{3-manifold triangulation}.
{\regina} does not restrict itself to simplicial complexes; instead it
uses \emph{generalised triangulations}, a broader notion that
can represent a rich array of 3-manifolds using very few tetrahedra,
as described in Section~\ref{s-simp-tri} below.

One of {\regina}'s most important functions is \emph{simplification}:
retriangulating a 3-manifold to use very few tetrahedra (ideally, the
fewest possible).
{\regina} includes a rich array of local simplification moves,
as outlined in Section~\ref{s-simp-moves}; together it combines
these moves into a strong simplification algorithm, which we describe in
full in Section~\ref{s-simp-alg}.  Moreover, this algorithm is fast:
in Section~\ref{s-simp-imp} we outline some important implementation
details, and show that it runs in small polynomial time.

This simplification algorithm gives no guarantee of achieving a
\emph{minimal} triangulation (where the number of tetrahedra is
the smallest possible).
However, the algorithm is extremely powerful in practice:
for instance, of the $652\,635\,906$ combinatorially distinct triangulations
of the 3-sphere with $3 \leq n \leq 10$ tetrahedra \cite{burton11-genus},
this simplification algorithm quickly reduces
\emph{all but 26} of them.

Having a powerful simplification algorithm is essential in computational
3-ma\-ni\-fold topology, for several reasons:
\begin{itemize}
    \item Many significant algorithms have \emph{running times that are
    exponential or worse} in the number of tetrahedra, and so reducing
    this number is of great practical importance.
    \item Good simplification may allow us to \emph{avoid these
    expensive algorithms entirely}.
    For instance, in 3-sphere recognition we could simplify the
    triangulation and then compare it to the known trivial
    ($\leq 2$-tetrahedron)
    triangulations of $S^3$: if they match then the expensive steps
    involving normal and almost normal surfaces can be avoided
    altogether \cite{burton11-orprime}.
    \item More generally, simplification offers good heuristics for
    solving the \emph{homeomorphism problem}.  Simplified triangulations
    are often well-structured, which makes {\regina}'s combinatorial
    recognition routines more likely to identify the underlying
    3-manifold (see Section~\ref{s-rec}).
    Moreover, if two triangulations $\tri$ and $\tri'$ represent
    the same 3-manifold, then after simplification it often becomes relatively
    easy to convert one into the other using Pachner moves
    (i.e., bistellar flips) \cite{burton11-orprime}.
\end{itemize}

Most of the individual local moves that we describe in
Section~\ref{s-simp-moves}
are well-known, and were implemented by Letscher in his early software
package \emph{Normal}.  Moreover,
Matveev \cite{matveev03-algms,matveev90-complexity}
and Martelli and Petronio \cite{martelli01-or9} perform similar moves in
the dual setting of special spines, and the author has described several
in the context of census enumeration \cite{burton04-facegraphs}.
We therefore outline these moves very briefly, although in our general
setting there are prerequisites for preserving the underlying 3-manifold
that earlier authors have not required (either because they worked in
the more flexible dual setting of spines, or they were simply proving
criteria for minimality and/or irreducibility).
We pay particular attention to the \emph{edge collapse} move,
whose prerequisites are complex and require non-trivial algorithms to
test quickly.
The combined simplification algorithm, whose underlying heuristics have
been fine-tuned over many years, is described here in
Section~\ref{s-simp-alg} for the first time.

In Section~\ref{s-simp-bfs}, we conclude our discussion on simplification
with a new technology that will soon make its way into {\regina}:
simplification by \emph{breadth-first search through the Pachner graph}.
Unlike the algorithm of Section~\ref{s-simp-alg}, this is not
poly\-no\-mial-time; however, it is found to be extremely effective in
handling the (very rare) pathological triangulations that cannot be
simplified otherwise.

\subsection{Generalised triangulations} \label{s-simp-tri}

A \emph{generalised 3-manifold triangulation}, or just
a \emph{triangulation} from here on, is formed from $n$ tetrahedra by
affinely identifying (or ``gluing'') some or all of their $4n$ faces in pairs.
A face is allowed to be identified with another face of the same tetrahedron.
It is possible that several edges of a single tetrahedron may be
identified together as a consequence of the face gluings,
and likewise for vertices.  It is common to work with
\emph{one-vertex triangulations}, in which all vertices of all tetrahedra
become identified as a single point.

Generalised triangulations can produce rich constructions with very few
tetrahedra.  For instance, Matveev obtains $103\,042$ distinct
closed orientable irreducible 3-manifolds from just
$\leq 13$ tetrahedra \cite{matveev12-owr}, including representatives
from all of Thurston's geometries except for $S^2 \times \R$
(which cannot yield a manifold of this type \cite{scott83-geometries}).

Figure~\ref{fig-rp3} illustrates a two-tetrahedron triangulation of
the real projective space $\R P^3$.  The two tetrahedra are labelled
0 and 1, and the four vertices of each tetrahedron are labelled 0, 1, 2
and 3.  Faces 012 and 013 of tetrahedron~0 are joined directly to faces
012 and 013 of tetrahedron~1, creating a solid ball; then faces
023 and 123 of tetrahedron~0 are joined to faces 132 and 032
of tetrahedron~1, effectively gluing the top of the ball to
the bottom of the ball with a $180^\circ$ twist.

\begin{figure}
    \centering
    \includegraphics[scale=0.9]{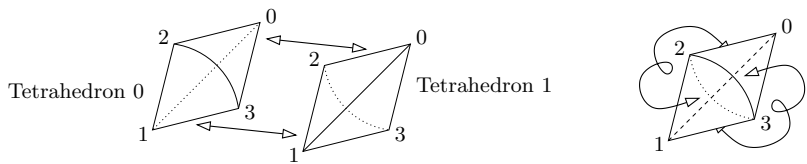}
    \caption{A triangulation of the real projective space $\R P^3$}
    \label{fig-rp3}
\end{figure}

All of this information can be encoded in a table of face gluings,
which is how {\regina} represents triangulations:

\medskip

\centerline{\small\begin{tabular}{c|r|r|r|r}
    Tetrahedron & Face 012 & Face 013 & Face 023 & Face 123 \\
    \hline
    0 & 1 (012) & 1 (013) & 1 (132) & 1 (032) \\
    1 & 0 (012) & 0 (013) & 0 (132) & 0 (032)
\end{tabular}}

\medskip

Consider the cell in the row for tetrahedron $t$ and the
column for face \textit{abc}.  If this cell
contains $u~(\mathit{xyz})$, this indicates that face
\textit{abc} of tetrahedron~$t$ is identified with face
\textit{xyz} of tetrahedron~$u$, using the affine gluing that maps
vertices $a$, $b$ and $c$ of tetrahedron~$t$ to vertices
$x$, $y$ and $z$ of tetrahedron~$u$ respectively.

For any vertex $V$ of a triangulation, the \emph{link} of $V$ is
defined as the frontier of a small regular neighbourhood of $V$.
This mirrors the traditional concept of a link in a simplicial
complex, but is modified to support the generalised triangulations that
we use in {\regina}.

A \emph{closed triangulation} is one that represents a closed
3-manifold (like the example above): every tetrahedron face must
be glued to a partner, and every vertex link must be a 2-sphere.
A \emph{bounded triangulation} is one that represents a 3-manifold
with boundary: some tetrahedron faces are not glued to anything
(together these form the boundary of the manifold),
and every vertex link must be a 2-sphere or a disc.
In either case, it is important that no edge be identified
with itself in reverse as a consequence of the face gluings.

%
%
%

{\regina} can also work with \emph{ideal} triangulations,
such as Thurston's famous two-tetrahedron triangulation of the
figure eight knot complement \cite{thurston78-lectures}: these are
triangulations in which vertex links can be
higher-genus closed surfaces.
For simplicity, in this paper we focus
our attention on \emph{closed and bounded triangulations only},
though most of our results apply equally well to ideal triangulations also.

Users can enter tetrahedron gluings directly into {\regina}, or
create triangulations in other ways: importing from
{\snappea} \cite{snappea} or other file formats;
building ``pre-packaged'' constructions
such as layered lens spaces or Seifert fibred spaces;
entering \emph{dehydration strings} \cite{callahan99-cuspedcensus}
or the more flexible \emph{isomorphism signatures} \cite{burton11-orprime}
(short pieces of text that completely encode a triangulation);
or accessing large ready-made
censuses that hold tens of thousands of triangulations of various types.

\subsection{Individual simplification moves} \label{s-simp-moves}

In this section we outline several individual local moves on
triangulations.  Following this, in Section~\ref{s-simp-alg} we piece
these moves together to build {\regina}'s full simplification algorithm.

We describe each move in full generality, as applied to either closed or
bounded triangulations, and without any restrictions such as orientability
or irreducibility.  For each move we give sufficient conditions under which
the move is ``safe'', i.e., does not change the underlying 3-manifold.

To keep the exposition as short as possible,
we simply state these conditions without proof.
However, the proofs are simple: the key idea is that, throughout the
intermediate stages of each move, we never crush
an edge, flatten a bigon or flatten a triangular pillow
whose bounding vertices, edges or faces respectively are
either identified or both in the boundary.
For detailed examples of these
types of arguments, see (for instance) the proof of Lemma~3.7
in \cite{burton07-nor10}.

\subsubsection{Pachner-type moves}

Our first moves are simple combinations of the well-known
\emph{Pachner moves} \cite{pachner91-moves},
also known as \emph{bistellar flips}.
All of these moves, when performed locally within some larger
triangulation, preserve the underlying 3-manifold with no special
preconditions required.

\begin{defn}
    Consider a triangular bipyramid: this can be triangulated with
    (i)~two distinct tetrahedra joined along an internal face, or with
    (ii)~three distinct tetrahedra
    joined along an internal degree three edge, as illustrated in
    Figure~\ref{fig-23}.  A \emph{2-3 Pachner move} replaces (i) with
    (ii), and a \emph{3-2 Pachner move} replaces (ii) with (i).

    \begin{figure}
    \centering
    \subfigure[The 2-3 and 3-2 Pachner moves]{\label{fig-23}%
        \includegraphics[scale=0.7]{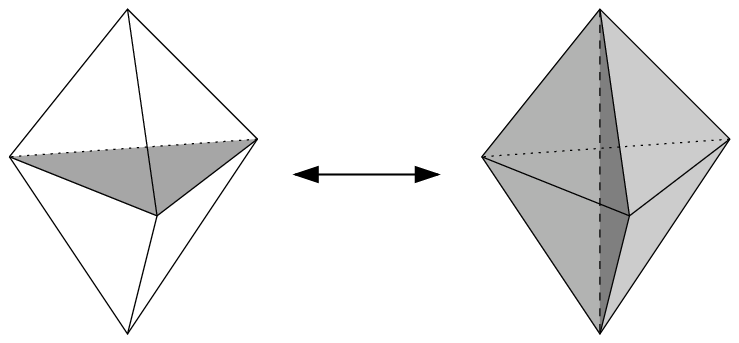}}
    \hspace{1cm}
    \subfigure[The ``aggregate'' 4-4 move]{\label{fig-44}%
        \includegraphics[scale=0.7]{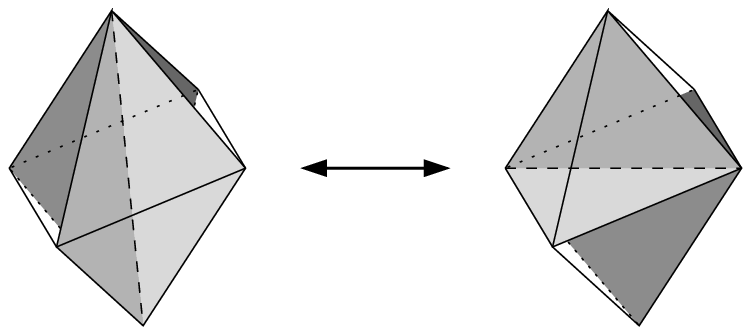}}
    \caption{Three Pachner-type moves}
    \end{figure}

    Consider a square bipyramid (i.e., an octahedron).
    This can be triangulated with four distinct tetrahedra joined along
    an internal degree four edge; moreover, there are three ways of
    doing this (since the internal edge could follow any of the three
    main diagonals of the octahedron).
    A \emph{4-4 move} replaces one such triangulation with another,
    as illustrated in Figure~\ref{fig-44}.
\end{defn}

There are two additional Pachner moves:
the \emph{1-4 move} and the \emph{4-1} move.
{\regina} does not use either of these: the 1-4 move
complicates the triangulation more than is necessary (since it
introduces a new vertex, which is never needed to simplify a
triangulation \cite{matveev87-transformations,matveev03-algms}), and the 4-1
move is a special case of an \emph{edge collapse} (described later in this
section).  The 4-4 move is not a Pachner move, but it can be expressed as
an ``aggregate'' of a 2-3 move followed by a 3-2 move.

\subsubsection{Moves around low-degree edges and vertices}

It is well-known that (under the right conditions) minimal triangulations
cannot have low-degree edges or vertices.  One typically proves this
using local moves around such edges or vertices that either reduce
the number of tetrahedra or break some prior assumption on the manifold.

Here we outline three such moves, which {\regina} uses in a more
general setting to simplify arbitrary triangulations.
Unlike the earlier Pachner-type moves,
these moves \emph{might} change the underlying 3-manifold; after describing the
moves, we list sufficient conditions under which they are ``safe''
(i.e., the 3-manifold is preserved).

\begin{defn}
    A \emph{2-0 vertex move} operates on a triangular pillow formed
    from two distinct tetrahedra surrounding an internal degree two vertex,
    as illustrated in Figure~\ref{fig-20v},
    and flattens this pillow to a single face.
    A \emph{2-0 edge move} operates on a bigon bipyramid formed from
    two distinct tetrahedra surrounding an internal degree two edge,
    as illustrated in Figure~\ref{fig-20e},
    and flattens this to a pair of faces.

    \begin{figure}
    \centering
    \subfigure[The 2-0 vertex move]{\label{fig-20v}%
        \includegraphics[scale=0.6]{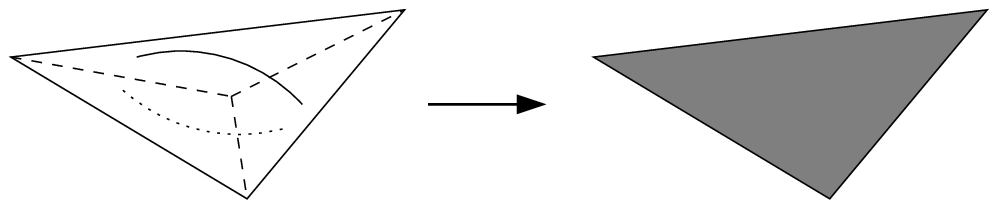}}
    \hspace{6.1cm}\phantom{x} 
    \\
    \subfigure[The 2-0 edge move]{\label{fig-20e}%
        \includegraphics[scale=0.6]{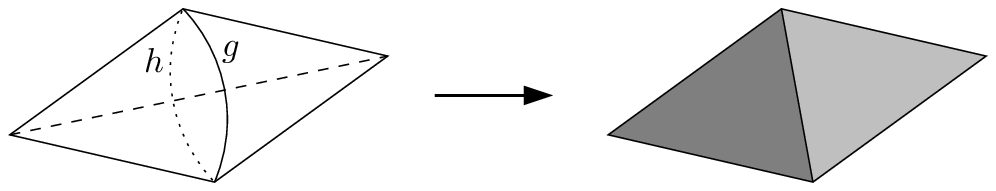}} 
    \hspace{0.8cm}
    \subfigure[The 2-1 edge move]{\label{fig-21e}%
        \smash{\includegraphics[scale=0.8]{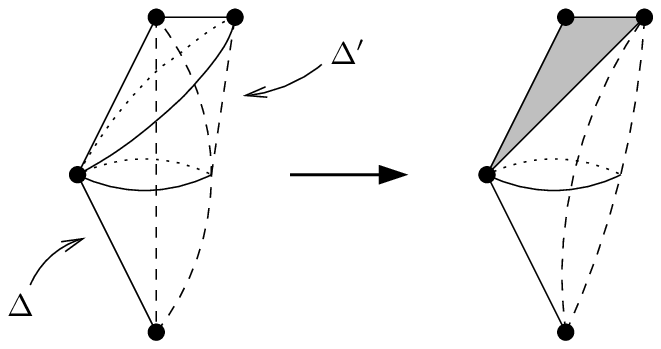}}}
    \caption{Local moves around low-degree edges and vertices}
    \end{figure}

    Consider a tetrahedron $\Delta$, two of whose faces are folded
    together around an internal degree one edge, and let $\Delta'$
    be some distinct adjacent tetrahedron as illustrated in
    Figure~\ref{fig-21e}
    (for clarity, the vertices of both tetrahedra are marked in bold).
    A \emph{2-1 edge move} flattens the two uppermost faces of $\Delta'$
    together and retriangulates the remaining region with a single
    tetrahedron to yield a new degree one edge, as shown in the illustration.
\end{defn}

To ensure that these moves do not change the underlying 3-manifold, the
following conditions are sufficient:
\begin{itemize}
    \item For the 2-0 vertex move, the two faces that bound the pillow
    must be distinct (i.e., not identified)
    and not both simultaneously in the boundary.

    \item For the 2-0 edge move, let $g$ and $h$ denote the two edges
    that we flatten together, as marked in Figure~\ref{fig-20e}.
    These edges must be distinct and not both boundary.
    Likewise, on each side of $g$ and $h$,
    the two faces that we flatten together must be distinct
    and not both boundary.
    Finally, although we may identify a face on one side of $g$ and $h$
    with a face on the other, we do not allow \emph{all four}
    faces to be identified in pairs, and we do not allow two of them
    to be identified if the other two are both in the boundary.

    \item For the 2-1 edge move, the two edges of $\Delta'$ that we
    flatten together must be distinct and not both boundary,
    and likewise the two faces of $\Delta'$ that we flatten together must
    be distinct and not both boundary.
    Note that the second constraint comes ``for free'' as a corollary of the
    first.
\end{itemize}

\subsubsection{Moves on the boundary}

Our next moves apply only to bounded triangulations.  As before, they
have the potential to change the underlying 3-manifold, and after
presenting the moves we give sufficient conditions under which they do not.

\begin{defn}
    An \emph{book opening move} operates on a face with precisely
    two of its three edges in the boundary of the triangulation,
    as illustrated in Figure~\ref{fig-book}, and ``unfolds'' the
    two tetrahedra on either side (which need not be distinct)
    to create two new boundary faces.
    A \emph{book closing move} is the inverse move: it operates
    on two distinct adjacent faces in the boundary and ``folds'' them
    together so that they become identified as a single internal face.

    \begin{figure}
    \centering
    \subfigure[The book opening and closing moves]{\label{fig-book}%
        \includegraphics[scale=0.8]{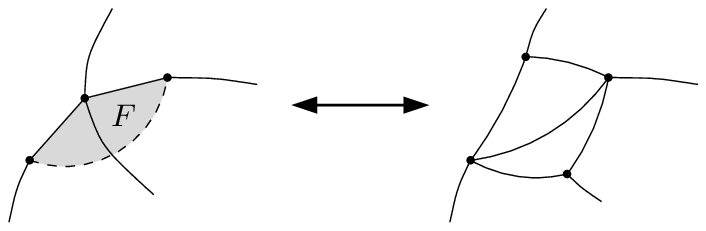}}
    \hspace{1cm}
    \subfigure[One type of boundary shelling move]{\label{fig-shell}%
        \includegraphics[scale=0.8]{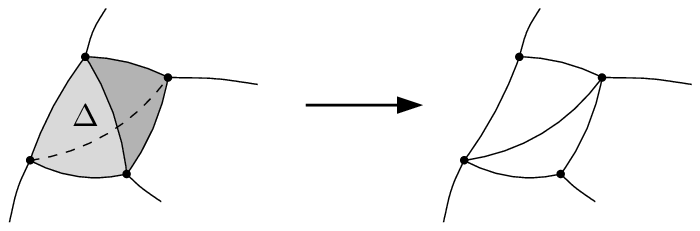}}
    \caption{Moves on the boundary of a 3-manifold triangulation}
    \end{figure}

    A \emph{boundary shelling move} operates on a tetrahedron $\Delta$
    that has precisely one, two or three faces in the boundary, and simply
    removes the tetrahedron from the triangulation (effectively ``plucking
    it off'' the boundary).  This move is illustrated in Figure~\ref{fig-shell}
    for the case where $\Delta$ has two faces in the boundary.
\end{defn}

The book opening move will always preserve the underlying 3-manifold.
The book closing move will preserve the 3-manifold if (i)~the
two boundary faces in question are not the \emph{only} faces in that
boundary component, and (ii)~the two vertices opposite the common edge
between these faces are not already identified.
The behaviour of the boundary shelling moves depends on the number of
faces of the tetrahedron $\Delta$ that lie in the boundary:
\begin{itemize}
    \item If $\Delta$ has three boundary faces, removing $\Delta$ always
    preserves the 3-manifold.
    \item If $\Delta$ has two boundary faces, we preserve
    the 3-manifold if (i)~the one edge of
    $\Delta$ not on these faces is internal to the triangulation,
    and (ii)~the two remaining (non-boundary)
    faces of $\Delta$ are not identified.
    \item If $\Delta$ has one boundary face, we
    preserve the 3-manifold if
    (i)~the one vertex of $\Delta$ not on this face is internal to the
    triangulation, and
    (ii)~no two of the three remaining (non-boundary) edges of $\Delta$
    are identified.
\end{itemize}

\subsubsection{Collapsing edges}

Our final move is the most powerful:
it collapses an edge of the triangulation to a single point,
and if the edge has high degree then it can eliminate many
tetrahedra.  The conditions for preserving the underlying 3-manifold are
complex, both to describe and to test algorithmically.  The details are
as follows.

\begin{defn}
    An \emph{edge collapse} operates on an edge $e$ of the triangulation
    that joins two distinct vertices.  It crushes edge $e$ to a point
    and flattens every tetrahedron containing $e$ to a face, as
    illustrated in Figure~\ref{fig-collapse}.

    \begin{figure}
    \centering
    \includegraphics[scale=0.7]{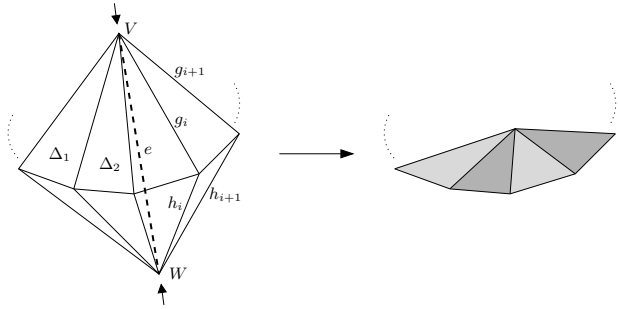}
    \caption{Collapsing an edge of a triangulation}
    \label{fig-collapse}
    \end{figure}
\end{defn}

Suppose the edge $e$ has degree $d$: we denote the $d$ tetrahedra that
contain it by $\Delta_1,\ldots,\Delta_d$, and we denote the two
endpoints of $e$ by $V$ and $W$.

If $e$ is an \emph{internal} edge of the triangulation
(i.e., its endpoints may lie on the boundary but its relative interior
must not), then the following conditions are sufficient to preserve the
underlying 3-manifold.
Section~\ref{s-simp-imp} gives details on how we can
test these conditions efficiently.

\begin{enumerate}
    \item The tetrahedra $\Delta_1,\ldots,\Delta_d$ must all be distinct.

    \item The two endpoints $V$ and $W$ (which we already know to be
    distinct) must not both be in the boundary.

    \item Denote the $d$ ``upper'' edges in the diagram that touch $V$
    by $g_1,\ldots,g_d$, and denote the corresponding ``lower'' edges
    that touch $W$ by $h_1,\ldots,h_d$
    (so each $g_i,h_i$ pair will be merged together by the edge collapse).
    We form a multigraph $\Gamma$
    (allowing loops and/or multiple edges) as follows:
    \begin{itemize}
        \item each distinct
        edge of the triangulation becomes a node of $\Gamma$;
        \item for each $i=1,\ldots,d$, we add an arc of
        $\Gamma$ between the two nodes corresponding to edges $g_i$ and $h_i$;
        \item we add an extra node $\partial$ to represent the boundary,
        and add an arc from $\partial$ to every node that represents a
        boundary edge.
    \end{itemize}
    Then this multigraph $\Gamma$ must not contain any cycles.%
    \label{en-bigons}

    \item In a similar way, we build a multigraph whose nodes represent
    \emph{faces} of the triangulation, whose arcs join corresponding
    ``upper'' and ``lower'' faces that touch $V$ and $W$ respectively,
    and with an extra boundary node connected to every boundary face.
    Again, this multigraph must not contain any cycles.%
    \label{en-pillows}
\end{enumerate}

In essence, condition~(\ref{en-bigons}) ensures that we never flatten a
chain of bigons whose outermost edges are both identified or both boundary,
and condition~(\ref{en-pillows}) ensures that we never flatten a chain of
triangular pillows whose outermost faces are both identified or both boundary.

If the edge $e$ lies in the boundary of the triangulation then we
may still be able to perform the move: the sufficient
conditions for preserving the 3-manifold are similar but slightly more
complex, and we refer the reader to {\regina}'s well-documented source
code for the details.

\subsection{The full simplification algorithm} \label{s-simp-alg}

Now that we are equipped with our suite of local simplification
moves, we can present the full details of {\regina}'s simplification
algorithm.  This algorithm is designed to be both fast and
effective, in that order of priority, and its underlying mechanics
have evolved over many years according to what has been found to work
well in practice.

Of course, other software packages---such as
{\snappea} \cite{snappea} and the {\recogniser}
\cite{recogniser}---have simplification algorithms of their own.
To date there has been no
comprehensive comparison between them; indeed, such a comparison would
be difficult given the ever-present trade-off between speed and
effectiveness.

\begin{algorithm} \label{a-simp}
Given an input triangulation $\tri$, the following procedure attempts to
reduce the number of tetrahedra in $\tri$ without changing the
underlying 3-manifold:
\begin{enumerate}
    \item Greedily reduce the number of tetrahedra as far as possible.
    We do this by repeatedly applying the following moves, in the
    following order of priority, until no such moves are possible:
    \begin{itemize}
        \item edge collapses;
        \item low-degree edge moves (3-2 Pachner moves,
            or 2-0 or 2-1 edge moves);
        \item low-degree vertex moves (2-0 vertex moves);
        \item boundary shelling moves.
    \end{itemize}%
    \label{en-simp-greedy}

    \item Make up to $5 R$ successive random 4-4 moves, where $R$ is the
    maximum number of available 4-4 moves that could be made from any
    single triangulation obtained during this particular iteration of
    step~(\ref{en-simp-44}).
    If we ever reach a triangulation from which we can greedily reduce
    the number of tetrahedra (as defined above) then return
    immediately to step~(\ref{en-simp-greedy}).%
    \label{en-simp-44}

    \item If the triangulation has boundary, then perform book opening
    moves until no more are possible.  If this enables us to collapse an
    edge then do so and return to step~(\ref{en-simp-greedy}).
    Otherwise undo the book openings and continue.%
    \label{en-simp-open}

    \item If a book closing move can be performed,
    then do it and return to step~(\ref{en-simp-greedy}).
    Otherwise terminate the algorithm.%
    \label{en-simp-close}
\end{enumerate}
\end{algorithm}

The greedy reduction in step~(\ref{en-simp-greedy})
prioritises edge collapses, because these can
remove many tetrahedra at once, and because we typically aim for
a one-vertex triangulation.
In step~(\ref{en-simp-44}), the coefficient $5$ is chosen
somewhat arbitrarily; note also that the quantity $R$ might increase
as this step progresses.
The book openings in step~(\ref{en-simp-open}) aim to increase
the number of vertices without adding new tetrahedra, in the hope
that an edge collapse becomes possible.
The book closures in step~(\ref{en-simp-close}) aim to leave us with the
smallest boundary possible, which becomes advantageous during other
expensive algorithms (such as normal surface enumeration).

Note that we never explicitly \emph{increase} the number of
tetrahedra (e.g., we never perform an explicit 2-3 Pachner move).
This is only possible because we have a large suite of moves
available: if we reformulate our algorithm in terms of Pachner
moves alone then almost every step would require both 2-3 and 3-2 moves.

One of the more prominent simplification techniques that we
do \emph{not} use is the 0-efficiency reduction of Jaco and
Rubinstein \cite{jaco03-0-efficiency}.  This is because the best
known algorithms for testing for 0-efficiency run in worst-case
exponential time.  Moreover, experimental observation suggests
that---ignoring well-known exceptions, such as reducible manifolds
and $\R P^3$---after running Algorithm~\ref{a-simp} we typically
find that the triangulation is already 0-efficient.
We discuss algorithms for 0-efficiency testing further in
Section~\ref{s-normal-eff}.

\subsection{Time complexity and performance} \label{s-simp-imp}

Simplification is one of the most commonly used ``large-scale''
routines in {\regina}'s codebase, and it is imperative that it runs
quickly.  In this section we analyse the running time, which includes a
discussion of key implementation details for the edge collapse.
All time complexities are based on the word RAM model of computation
(so, for instance, adding two integers is considered $O(1)$ time as long
as the integers only require $\log n$ bits).

Throughout this discussion, we let $n$ denote the number of tetrahedra
in the triangulation.  Some key points to note:
\begin{itemize}
    \item \emph{Adding a new tetrahedron} takes $O(1)$ time.
    However, \emph{deleting a tetrahedron} takes $O(n)$ time
    for {\regina} since we must reindex the
    tetrahedra that remain (i.e., if we delete tetrahedron $\Delta_i$
    then we must rename $\Delta_j$ to $\Delta_{j-1}$ for all $j>i$).\footnote{%
        This is {\regina}'s own implementation constraint:
        many moves could be $O(1)$ if we ignored {\regina}'s need to
        consecutively index tetrahedra and other objects.
        Either way, however, the time complexity for the
        edge collapse---and hence the full simplification algorithm---would
        be the same.}
    Deleting \emph{many} tetrahedra can be done in combined $O(n)$
    time, since if we are careful in our implementation then each
    leftover tetrahedron only needs to be reindexed at most once.
    \item \emph{Computing the skeleton} of the triangulation (i.e.,
    identifying and indexing the distinct vertices, edges, faces and boundary
    components of the triangulation, and linking these to and from the
    corresponding tetrahedra) can be done in
    $O(n)$ time using standard depth-first search techniques.
\end{itemize}

\begin{theorem}
    Algorithm~\ref{a-simp} (the full simplification algorithm)
    runs in $O(n^4 \log n)$ time, where $n$ is the number of tetrahedra
    in the input triangulation.
\end{theorem}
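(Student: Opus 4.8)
The plan is to factor the running time as (number of top-level passes through the main loop) times (cost of a single pass), and to bound each factor separately. By a \emph{pass} I mean one execution of steps~(\ref{en-simp-greedy})--(\ref{en-simp-close}), up to the point where the algorithm either terminates or jumps back to step~(\ref{en-simp-greedy}). First I would record two structural facts that are immediate from the move descriptions of Section~\ref{s-simp-moves}: the number of tetrahedra $n$ never increases (no move in Algorithm~\ref{a-simp} creates tetrahedra), while the 4-4 moves of step~(\ref{en-simp-44}) and the book openings of step~(\ref{en-simp-open}) leave $n$ fixed and the greedy moves of step~(\ref{en-simp-greedy}) together with the collapse triggered in step~(\ref{en-simp-open}) each strictly decrease it.

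For the first factor I would use $n$ as a primary monovariant and the number of boundary faces $b$ as a secondary one. Every jump back to step~(\ref{en-simp-greedy}) either accompanies a strict decrease of $n$ (a reduction found in step~(\ref{en-simp-greedy}), the greedy reduction exposed in step~(\ref{en-simp-44}), or the collapse in step~(\ref{en-simp-open})) or is a book closing in step~(\ref{en-simp-close}); a book closing fixes $n$ but lowers $b$ by two. Since $n$ never exceeds its initial value and cannot drop below zero, there are $O(n)$ such $n$-decreasing jumps, which partition the computation into $O(n)$ \emph{phases} of constant $n$. Within a phase the only moves that alter the triangulation are the internal 4-4 moves, which leave $b$ fixed, and the book closings, which each lower $b$ by two; as $0\le b\le 4n$ throughout the phase, each phase contains $O(n)$ book closings. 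Hence there are $O(n^2)$ passes in all. The delicate point is to confirm that the non-reducing steps cannot spin indefinitely: the up to $5R$ random 4-4 moves of step~(\ref{en-simp-44}) either expose a greedy reduction (ending the phase) or are exhausted, and the book openings of step~(\ref{en-simp-open}) are explicitly undone unless they enable a collapse, so neither can proceed without eventually lowering $n$ or $b$.

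For the second factor I would bound the cost of a single pass. The skeleton (vertices, edges, faces and boundary components, with their incidences) is rebuilt in $O(n)$ time by depth-first search after each move, and the Pachner-type, low-degree and boundary moves can each be tested and applied in $O(n)$ time, since they are local once this $O(n)$ bookkeeping is available. The bottleneck is the edge collapse of step~(\ref{en-simp-greedy}): a greedy scan must test every one of the $O(n)$ edges for collapsibility, and for a single edge the expensive part is verifying conditions~(\ref{en-bigons}) and (\ref{en-pillows}), i.e.\ building the edge- and face-multigraphs $\Gamma$ (each with $O(n)$ nodes and arcs, read directly off the skeleton) and checking that they are acyclic. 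Using a union--find structure, each acyclicity test costs $O(n\log n)$, so one full greedy scan costs $O(n^2\log n)$; the at most $5R=O(n)$ 4-4 moves of step~(\ref{en-simp-44}), at $O(n)$ apiece (rebuild included), contribute only $O(n^2)$ and are absorbed. Thus a single pass runs in $O(n^2\log n)$ time.

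Multiplying the two factors gives $O(n^2)\cdot O(n^2\log n)=O(n^4\log n)$, as claimed; the $O(n)$-time cost of deleting a tetrahedron is also absorbed, since at most $n$ tetrahedra are ever deleted and this contributes only $O(n^2)$ overall. I expect the main obstacle to be the first factor rather than the second: the cost of a pass is a routine accounting once the edge-collapse test is organised around the skeleton, whereas bounding the number of passes needs the two-level monovariant together with a careful argument that the randomised step~(\ref{en-simp-44}) and the speculative book openings of step~(\ref{en-simp-open}) contribute only polynomially. A secondary subtlety, on which the running-time claim for the collapse quietly rests, is to check that the multigraph acyclicity tests genuinely capture conditions~(\ref{en-bigons}) and (\ref{en-pillows}) and can be evaluated in near-linear time.
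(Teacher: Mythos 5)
There is a genuine gap, and it sits in your second factor. You price each of the up-to-$5R = O(n)$ 4-4 moves of step~(\ref{en-simp-44}) at $O(n)$ apiece, ``rebuild included''. But the semantics of step~(\ref{en-simp-44}) require that \emph{after every single 4-4 move} the algorithm test whether a greedy reduction has become available (``if we ever reach a triangulation from which we can greedily reduce \ldots{} return immediately''), and that availability test is itself dominated by the edge-collapse candidates: $O(n)$ edges at $O(n\log n)$ per acyclicity test, i.e.\ $O(n^2\log n)$ after each 4-4 move. (The paper's closing remark---that restricting tests to the neighbourhood of the last 4-4 move would remove a factor of $n$, and is planned for \emph{future} versions---confirms that the current algorithm pays for a full rescan each time.) Properly priced, a single pass costs $O(n^3\log n)$, not $O(n^2\log n)$, and multiplied by your $O(n^2)$ pass count this yields only $O(n^5\log n)$, one factor of $n$ short of the theorem. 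Your final answer matches the statement only because two factor-of-$n$ misestimates cancel: the paper counts just $O(n)$ stages in total (each ending with a drop in $n$ or a book closing, ``since the triangulation and its boundary cannot disappear entirely''), where you count $O(n^2)$ passes, while your per-pass cost is a factor of $n$ too small.

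The paper's accounting, by contrast, is: $O(n)$ stages with $O(n)$ performed moves each ($5R \leq 30n$ 4-4 moves plus $\leq 2n$ book openings), hence $O(n^2)$ performed moves and $O(n^3)$ total performing time; between each consecutive pair of performed moves it charges $O(n)$ candidate-move tests at worst-case $O(n\log n)$ apiece, giving $O(n^3)$ tests and $O(n^4\log n)$ total testing time. To repair your argument along these lines you need the \emph{global, amortised} bound that the number of book closings over the entire run is $O(n)$---not merely your per-phase bound of $O(n)$ closings, which, summed over $O(n)$ phases of constant $n$, is exactly where the extra factor of $n$ enters. (Your within-phase monovariant is fine as far as it goes: $b$ is indeed non-increasing inside a phase, since kept book openings are immediately followed by a collapse, which ends the phase.) Your step~(\ref{en-simp-greedy}) accounting, on the other hand, survives globally even though you did not spell it out: each successful greedy move strictly decreases $n$, so the rescans they trigger contribute only $O(n)\cdot O(n^2\log n)=O(n^3\log n)$ over the whole run; and your union--find treatment of conditions~(\ref{en-bigons}) and~(\ref{en-pillows}) matches the paper exactly.
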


The worst culprit in raising the time complexity is the edge collapse
move, and specifically, testing its sufficient conditions.
In practice running times are much faster than quartic, and with more delicacy
we could bring down the theoretical time complexity to reflect this;
we return to such issues after the proof.

\begin{proof}
    First, we observe that for every move type except for the edge collapse,
    we can test
    the sufficient conditions in $O(1)$ time and perform the move in
    $O(n)$ time (where the dominating factor is deleting tetrahedra and
    rebuilding the skeleton, as outlined above).

    For the edge collapse move, performing the move takes $O(n)$ time
    but testing the sufficient conditions is a little slower.
    Recall that we must build a multigraph $\Gamma$ and ensure that it
    contains no cycles.  We can do this by adding one arc at a time, and
    tracking connected components: if an arc joins two distinct
    components then we merge them into a single component, and if an arc
    joins some component with itself then we obtain a cycle and the
    sufficient conditions fail.

    To track connected components, we use the well-known
    \emph{union-find} data structure \cite{cormen01-algorithms}.
    With union-find, the operations of (i)~identifying which component a
    node belongs to and (ii)~merging two components together each take
    $O(\log n)$ time.  Therefore testing the multigraph $\Gamma$ for
    cycles takes $O(n \log n)$ time overall, and testing sufficient
    conditions for an edge collapse likewise becomes $O(n \log n)$.

    From here the running time is simple to obtain.
    The algorithm works through ``stages'', where in each stage we either
    reduce the number of tetrahedra, or we reduce the number of boundary
    faces through a book closing move.  Either way, it is clear there
    can be at most $O(n)$ such stages in total (since the triangulation
    and its boundary cannot disappear entirely).

    Within each stage we \emph{perform} $O(n)$ moves
    in total: at most $5R \leq 5 (\textrm{\#\ edges}) \leq 5 \cdot 6n$
    successive 4-4 moves, and at most
    $(\textrm{\#\ internal\ faces}) \leq 2n$ successive book opening moves.
    This gives us a total of $O(n^2)$ moves throughout the life of the
    algorithm, each requiring $O(n)$ time to perform.

    Between each pair of moves, we might \emph{test} a large number of
    potential moves whose sufficient conditions ultimately fail.
    The number of moves that we test on a given triangulation is
    clearly $O(n)$ (since there are $O(n)$ possible ``local regions'' in
    which each type of move could be performed), and so throughout the
    entire algorithm with its $O(n^2)$ moves we run a total of $O(n^3)$
    tests.  In the worst case (edge collapses), each test could take
    $O(n \log n)$ time.

    It is clear now that the total time spent \emph{performing} moves is
    $O(n^3)$ and the total time spent \emph{testing} sufficient
    conditions is $O(n^4 \log n)$, yielding a running time of
    $O(n^4 \log n)$ overall.
\end{proof}

Some further remarks on this running time:
\begin{itemize}
    \item
    We noted earlier that \emph{in practice} running times are faster than
    $O(n^4 \log n)$.  This is because the powerful edge collapse
    moves typically reach the fewest possible number of vertices very
    quickly (i.e., one vertex for a closed manifold, or else one vertex for
    each boundary component).  Once we achieve this, testing sufficient
    conditions for an edge collapse move becomes $O(1)$ time, which
    eliminates an $n \log n$ factor from our running time.

    \item
    In theory, we can remove a factor of $n$ as follows:
    once greedy simplification fails and we
    move on to steps~(\ref{en-simp-44}) and~(\ref{en-simp-open}),
    we only test for \emph{new} simplification moves in the immediate
    neighbourhood of the \emph{last} 4-4 or book opening move.
    The implementation becomes more subtle and the bookkeeping more
    complex, and this is planned for future versions of {\regina}.

    \item
    Finally, we note that the $\log n$ factor can be stripped down to
    ``almost constant'': essentially an inverse of the Ackermann function,
    and $\leq 4$ in all conceivable situations.
    We can do this by applying the \emph{path compression}
    optimisation to the union-find data structure; see
    \cite{cormen01-algorithms,tarjan75-efficiency} for details.
\end{itemize}

We finish this section with a practical demonstration.
Let $\tri$ be the 23-tetra\-hedron triangulation of the
Weber-Seifert dodecahedral space presented in \cite{burton12-ws},
and let $S$ be an arbitrarily-chosen vertex normal surface of the
highest possible genus (here we choose vertex surface \#1733,
which is an orientable genus~16 surface).
If we cut $\tri$ open along the surface $S$
using {\regina}'s \texttt{cutAlong()} procedure,
we obtain a (disconnected) bounded triangulation $\tri'$ with 1990 tetrahedra.
The simplification algorithm reduces this to 135 tetrahedra in roughly
$1.9$~seconds (as measured on a 2.93\ GHz Intel Core i7).

If we perform a \emph{barycentric subdivision} on $\tri'$, we obtain a
new triangulation $\tri''$ with 47\,760 tetrahedra (and
a large number of vertices,
which means a large number of expensive edge collapse moves).
Again the simplification algorithm reduces this to 135 tetrahedra,
but this time takes 780~seconds, suggesting (for this arbitrary example)
only a quadratic---not quartic---growth rate in $n$.

\subsection{Exhaustive simplification via the Pachner graph} \label{s-simp-bfs}

We finish our discussion on simplification with a new technology:
\emph{breadth-first search through the Pachner graph}.
The key idea is, instead of using greedy simplification heuristics,
to try \emph{all possible sequences} of Pachner moves
(up to a user-specified limit).  The result is a much slower, but also
much stronger, simplification algorithm.  This algorithm is based on
ideas from the large-scale experimental study of Pachner graphs
described in \cite{burton11-orprime},
and the code will soon be merged into {\regina}'s main source tree.

\begin{defn}
    For a closed 3-manifold triangulation $\mfd$, the
    \emph{restricted Pachner graph} $\rpg$ is the infinite graph whose
    nodes correspond to isomorphism classes of one-vertex triangulations
    of $\mfd$ (where by \emph{isomorphism} we mean a relabelling of
    tetrahedra and/or their vertices),
    and where two nodes are joined by an arc if there is a 2-3 or 3-2
    Pachner move between the corresponding triangulations.
    The nodes of $\rpg$ are partitioned into finite \emph{levels}
    $1,2,\ldots$ according to the number of tetrahedra in the
    corresponding triangulations.
\end{defn}

The basic idea is as follows.  By a result of Matveev
\cite{matveev87-transformations,matveev03-algms}, if we exclude
level~1 then the restricted Pachner graph $\rpg$
is always connected, and so we should be able to simplify a non-minimal
triangulation of $\mfd$ by finding a path through $\rpg$ from the
corresponding node to some other node at a lower level.  In detail:

\begin{algorithm} \label{a-bfs}
    Given a one-vertex triangulation $\tri$ with $n$ tetrahedra
    representing a closed 3-manifold $\mfd$,
    as well as a user-defined ``height parameter'' $h$,
    the following procedure attempts to find a triangulation of $\mfd$
    with $<n$ tetrahedra:
    \begin{enumerate}
        \item Conduct a breadth-first search through $\rpg$
        starting from the node representing $\tri$, but restrict
        this search to consider only nodes at levels $\leq n+h$.

        \item If we ever reach a node at level $<n$, this yields a
        simpler triangulation (which we try to simplify further with
        the fast Algorithm~\ref{a-simp}).
        Otherwise we advise the user to try again with a
        larger $h$ (if they can afford to do so).
    \end{enumerate}
\end{algorithm}

The height parameter is needed because $\rpg$ is infinite, and because
even the individual levels grow extremely quickly (for $\mfd=S^3$ the growth
rate is at least exponential, and it is open as to whether
it is super-exponential \cite{benedetti11-spheres}).
Extremely small height parameters work very well in practice:
for $\mfd=S^3$ there are no known cases for which $h=2$ will not
suffice \cite{burton11-pachner}.

Because of the enormous number of nodes involved,
a careful implementation of Algorithm~\ref{a-bfs} is vital.
We cannot afford to build even an entire single level of $\rpg$ beforehand;
instead we construct nodes as reach them, and cache them using their
isomorphism signatures (polynomial-time computable strings that also
manage isomorphism testing \cite{burton11-pachner}).  The algorithm
lends itself well to \emph{parallelisation} (using multithreading with
shared memory, not large-scale distributed processing).

Unlike the earlier Algorithm~\ref{a-simp}, this new
Algorithm~\ref{a-bfs} is certainly not poly\-no\-mial-time.  Even if we are
able to simplify the triangulation using $k$ Pachner moves for small $k$,
we still test a total of $O((n+h)^k)$ potential moves; moreover,
the growth rate
of $k$ itself is unknown, and so even the exponent could become exponential.
See \cite{burton11-pachner} for experimental measurements of $k$ for a
range of different 3-manifolds.

In practice, with height parameter $h=2$, Algorithm~\ref{a-bfs} easily
simplifies the $26$ ``pathological'' triangulations of $S^3$ with $\leq 10$
tetrahedra that the faster Algorithm~\ref{a-simp} could not.
For these cases we need $5 \leq k \leq 9$ moves, with CPU times
ranging from $0.8$ to $14$ seconds.
This is quite slow for just $n \leq 10$ tetrahedra, and highlights the
ever-present trade-off between speed and effectiveness.


\section{Normal and almost normal surfaces}

One of {\regina}'s core strengths is its ability to enumerate and work
with normal and almost normal surfaces.  A \emph{normal surface} in
a 3-manifold triangulation $\mathcal{T}$ is a
properly embedded surface in $\mathcal{T}$ that meets each tetrahedron
in a (possibly empty) collection of disjoint curvilinear triangles and/or
quadrilaterals, as illustrated in Figure~\ref{fig-normaldiscs}.
An \emph{octagonal almost normal surface} is defined in the same way, but
also requires that exactly one tetrahedron contains exactly one additional
octagonal piece, as illustrated in Figure~\ref{fig-octagon}.

\begin{figure}
\centering
\subfigure[Triangles and quadrilaterals]{\label{fig-normaldiscs}%
    \qquad\quad\includegraphics[scale=0.5]{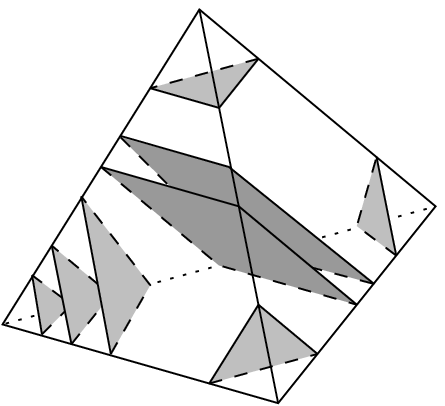}\qquad\quad}
\hspace{1cm}
\subfigure[An octagonal piece]{\label{fig-octagon}%
    \quad\includegraphics[scale=0.5]{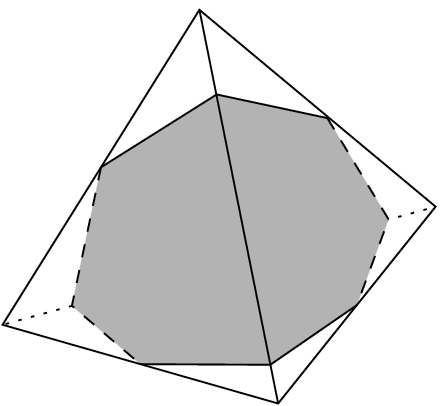}\quad}
\caption{A tetrahedron intersecting a normal or almost normal surface}
\end{figure}

Normal surfaces are a powerful tool for
high-level recognition and decomposition algorithms;
prominent examples include unknot recognition \cite{haken61-knot},
connected sum decomposition \cite{jaco03-0-efficiency},
and testing for incompressible surfaces \cite{jaco84-haken}.
Almost normal surfaces were introduced by Rubinstein,
and play a central role in algorithms such as 3-sphere recognition
\cite{rubinstein95-3sphere}, Heegaard genus \cite{lackenby08-tunnel},
and recognising small Seifert fibred spaces \cite{rubinstein04-smallsfs}.
Rubinstein originally defined almost normal surfaces to include either
a single octagonal piece or a single \emph{tube} piece, but Thompson later
showed that for 3-sphere recognition, only octagons need to be
considered \cite{thompson94-thinposition}.

We begin in Section~\ref{s-normal-prelim} with a
very brief overview of the necessary
concepts from normal surface theory; for more context
we refer the reader to \cite{hass99-knotnp}.

In Section~\ref{s-normal-enum} we discuss the all-important problem
of \emph{enumerating} vertex and fundamental normal surfaces,
and introduce a new trie-based optimisation to alleviate the most
severe bottlenecks in the enumeration algorithm.
We follow in Section~\ref{s-normal-eff} with a brief discussion of
0-efficiency and the important problem of locating normal \emph{spheres},
describing the rationale behind {\regina}'s choice of algorithm,
and explaining why other well-known options are not effective.

Section~\ref{s-normal-sphere}
outlines {\regina}'s current implementations of 3-sphere recognition, 3-ball
recognition and connected sum decomposition.  Although the key ideas
are already known,
the implementations have evolved to the point where all three
algorithms are now surprisingly simple, and we present them here as a useful
reference in a modern algorithmic form that is ``ready for implementation''.

We finish in Section~\ref{s-normal-tree} with a brief discussion of
\emph{tree traversal} algorithms, a new technology soon to appear
in {\regina} based on backtracking and linear
programming, and with enormous potential for improving
performance on large problems.

Beyond the algorithms described here, {\regina} offers many ways to
analyse normal surfaces, both ``at a glance'' and in detail.
It supports the complex operation of cutting a triangulation open along
a normal surface and retriangulating,
and it supports the Jaco-Rubinstein operation of
crushing a surface to a point \cite{jaco03-0-efficiency}
(which may introduce additional changes in topology).

\subsection{Preliminaries from normal surface theory} \label{s-normal-prelim}

In an $n$-tetrahedron triangulation $\tri$,
normal surfaces correspond to integer vectors in a cone of the form
$\{\mathbf{x} \in \R^{7n}\,|\,A\mathbf{x}=0,\ \mathbf{x}\geq 0\}$,
where the matrix $A$ of \emph{matching equations} is derived from $\tri$.
The $7n$ coordinates are grouped into $4n$ \emph{triangle coordinates},
which count the triangles at each corner of each tetrahedron,
and $3n$ \emph{quadrilateral coordinates}, which count the
quadrilaterals passing through each tetrahedron in each of the three
possible directions.
Such vectors must also satisfy the \emph{quadrilateral constraints},
which require that at most one quadrilateral coordinate within each
tetrahedron can be non-zero.
These constraints map out a (typically non-convex) union of faces of the
cone above.

An important observation is that non-trivial connected normal surfaces can be
reconstructed from their $3n$ quadrilateral coordinates alone
\cite{tollefson98-quadspace}.  We can therefore identify such
surfaces with integer points in a smaller-dimensional cone of the form
$\{\mathbf{x} \in \R^{3n}\,|\,B\mathbf{x}=0,\ \mathbf{x}\geq 0\}$.
We refer to $\R^{7n}$ and $\R^{3n}$ as working in
\emph{standard coordinates} and \emph{quadrilateral coordinates} respectively.

A normal surface is called a (standard or quadrilateral) \emph{vertex surface}
if its vector in (standard or quadrilateral) coordinates
lies on an extreme ray of the corresponding cone,
and it is called a (standard or quadrilateral) \emph{fundamental surface}
if its vector lies in the Hilbert basis of the cone.
The quadrilateral vertex and fundamental surfaces are typically a strict
subset of their standard counterparts.

Throughout this paper, we use the phrase \emph{almost normal surface} to
refer exclusively to the case where the extra piece is an octagon (not a
tube).  For almost normal surfaces we introduce three additional octagon
coordinates for each tetrahedron, yielding a cone in
\emph{standard almost normal coordinates} of the form
$\{\mathbf{x} \in \R^{10n}\,|\,C\mathbf{x}=0,\ \mathbf{x}\geq 0\}$.
As before, non-trivial connected surfaces can be reconstructed from their
$3n$ quadrilateral and $3n$ octagon coordinates \cite{burton10-quadoct},
yielding a cone in \emph{quadrilateral-octagon coordinates} of the form
$\{\mathbf{x} \in \R^{6n}\,|\,D\mathbf{x}=0,\ \mathbf{x}\geq 0\}$.
We can likewise define vertex and fundamental surfaces in these
coordinate systems.

To finish, we make the well-known observation that
Euler characteristic is a linear function in standard normal
and almost normal coordinates \cite{jaco95-algorithms-decomposition},
though it is not linear in quadrilateral or quadrilateral-octagon coordinates.

\subsection{Enumeration} \label{s-normal-enum}

Many high-level algorithms are based on locating particular surfaces,
which---if they exist---can be found as vertex normal surfaces,
or for some more difficult algorithms, fundamental normal surfaces.
{\regina} comes with heavily optimised algorithms for enumerating all
vertex normal surfaces \cite{burton10-dd} or fundamental normal surfaces
\cite{burton11-hilbert} in a triangulation, in all of the coordinate
systems listed above.

Here we focus on the vertex enumeration algorithm, which is based on the
double description method for enumerating extreme rays of polyhedral
cones \cite{fukuda96-doubledesc,motzkin53-dd}.  We outline the double
description method very briefly, and then introduce a new trie-based
optimisation that yields significant improvements in its running time.

In brief, the double description method enumerates the extreme rays of
the cone
$\{\mathbf{x} \in \R^{d}\,|\,A\mathbf{x}=0,\ \mathbf{x}\geq 0\}$
by constructing a series of cones $\mathcal{C}_0,\mathcal{C}_1,\ldots$,
where each $\mathcal{C}_i$ is defined only using the first
$i$ rows of $A$.  The initial cone $\mathcal{C}_0$ is simply the
non-negative orthant, with extreme rays defined by the $d$ unit vectors,
and each subsequent cone $\mathcal{C}_i$ is obtained inductively from
$\mathcal{C}_{i-1}$ by intersecting with a new hyperplane $H_i$.
The extreme rays of $\mathcal{C}_i$ are obtained from (i)~extreme rays
of $\mathcal{C}_{i-1}$ that lie on $H_i$; and
(ii)~convex combinations of \emph{pairs} of adjacent extreme rays of
$\mathcal{C}_{i-1}$ that lie on either side of $H_i$.

There are significant optimisations that can be applied to the
double description method in the context of normal surface theory; see
\cite{burton09-convert,burton10-dd} for details.  However, a major
problem remains: the intermediate cones $\mathcal{C}_i$ can have a great
many extreme rays (the well-known ``combinatorial explosion'' in the
double description method), and the combinatorial algorithm\footnote{%
    There is an alternative algebraic algorithm; see
    \cite{fukuda96-doubledesc} for theoretical and practical comparisons.}
for identifying all pairs of \emph{adjacent} extreme rays of $\mathcal{C}_i$
is cubic in the total number of extreme rays.

This cubic procedure stands out as the most severe bottleneck in the
algorithm.  The adjacency test is simple: two extreme rays
$\mathbf{x}_1,\mathbf{x}_2$ of $\mathcal{C}_i$
are adjacent if and only if there is no
other extreme ray $\mathbf{z}$ of $\mathcal{C}_i$ for which,
whenever the $i$th coordinates of $\mathbf{x}_1$ and $\mathbf{x}_2$
are both zero, then the $i$th coordinate of $\mathbf{z}$ is zero also.
The cubic algorithm essentially just iterates through all possibilities for
$\mathbf{x}_1$, $\mathbf{x}_2$ and $\mathbf{z}$.

We improve this procedure by storing the extreme rays of $\mathcal{C}_i$
in a \emph{trie} (also known as a \emph{radix tree}) \cite{sedgewick92}.
In our case this is a binary tree of depth $d$,
illustrated in Figure~\ref{fig-trie}, where at depth $i$ the left and
right branches contain all extreme rays for which the $(i+1)$th coordinate is
zero and non-zero respectively.\footnote{%
    This is well-defined because each extreme ray is of the form
    $\{\lambda \mathbf{v}\,|\,\lambda \geq 0\}$ for some vector
    $\mathbf{v}$.}
The extreme rays themselves correspond
to leaves of the tree at depth $d$.
We only store those portions of the tree that
contain extreme rays as descendants, so the total number of nodes is
$O(d \cdot \mbox{\# extreme rays})$ and not $2^{d+1}-1$.

\begin{figure}
    \centering
    \includegraphics[scale=0.9]{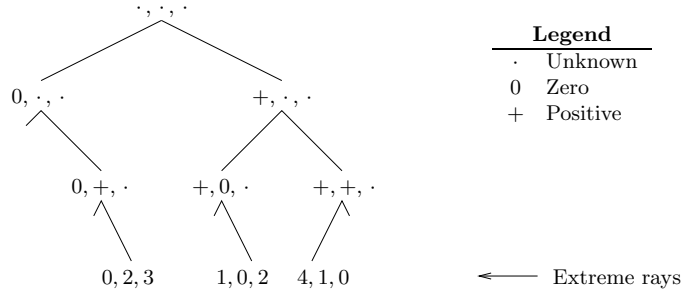}
    \caption{An example in $\R^3$ of storing extreme rays in a trie}
    \label{fig-trie}
\end{figure}

To identify all pairs of adjacent extreme rays of $\mathcal{C}_i$,
we first insert all extreme rays into the trie: each insertion takes $O(d)$
time (just follow a path down and create new nodes as needed).
We then iterate through all pairs of extreme rays $\mathbf{x}_1,\mathbf{x}_2$,
and to test \emph{adjacency}
we walk through the trie by (i)~starting at the root,
and (ii)~whenever we reach a node at level $i$, if the $(i+1)$th coordinates of
$\mathbf{x}_1$ and $\mathbf{x}_2$ are both zero then we follow the left
branch, and otherwise we follow both the left and right branches
in a depth-first manner.  We declare $\mathbf{x}_1$ and $\mathbf{x}_2$ to
be adjacent if and only if we do \emph{not} locate some other extreme
ray $\mathbf{z} \neq \mathbf{x}_1,\mathbf{x}_2$ during our walk.

We can optimise this trie further:
\begin{itemize}
    \item At each node, we store the number of extreme rays in the
    corresponding subtree.  This allows us to avoid the ``false
    positives'' $\mathbf{x}_1$ and $\mathbf{x}_2$: if we are in a
    subtree containing one or both of $\mathbf{x}_1,\mathbf{x}_2$ and the
    number of extreme rays is one or two respectively, then we can
    backtrack immediately.

    \item We can ``compress'' the trie by storing extreme rays at the
    nodes corresponding to their last \emph{non-zero} coordinate,
    instead of at depth $d$, a useful optimisation given that
    our extreme rays may contain many zeroes.
\end{itemize}

The appeal of this data structure is that we are able to target our
search by only looking at ``promising'' candidates for $\mathbf{z}$,
instead of scanning through all extreme rays.
It is difficult to pin down the theoretical complexity of our trie-based
search; certainly it might be exponential in $d$ (because of the
branching), but it is clearly no worse than
$O(d \cdot \mbox{\# extreme rays})$, i.e., the total number of nodes.

In practice, it serves us very well.
Consider again the 23-tetrahedron triangulation of the Weber-Seifert
dodecahedral space from \cite{burton12-ws}.
With the original implementation of the double description method
(including all optimisations except for the trie-based search),
enumerating all 698 quadrilateral vertex normal surfaces requires
$174$ minutes (measured on a 2.93\ GHz Intel Core i7).
The new trie-based algorithm reduces this to $62$ minutes,
cutting the running time from roughly three hours down to just one.

\subsection{0-efficiency} \label{s-normal-eff}

An important problem in normal surface theory is searching for
normal \emph{spheres}: this is at the heart of Jaco and Rubinstein's
0-efficiency machinery \cite{jaco03-0-efficiency}, and features in all
of the high-level algorithms listed in Section~\ref{s-normal-sphere}.

A closed orientable 3-manifold is \emph{0-efficient}
if its only normal 2-spheres are the trivial
vertex linking spheres (which contain only triangles, and which
must always be present).
If a triangulation is not 0-efficient, then we can use Jaco and Rubinstein's
``destructive crushing'' procedure \cite{jaco03-0-efficiency}:
we crush the non-trivial sphere to a point, and then collapse away
any degenerate non-tetrahedron pieces (such as footballs, pillows and so on)
to become edges and faces, as illustrated in Figure~\ref{fig-jrcrush}.

\begin{figure}
    \centering
    \includegraphics[scale=0.9]{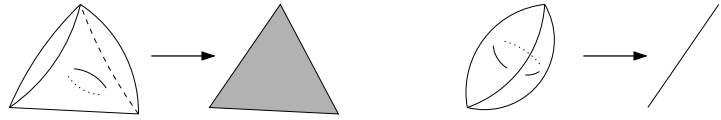}
    \caption{Examples of collapsing non-tetrahedron pieces}
    \label{fig-jrcrush}
\end{figure}

The result is that every tetrahedron that contains a quadrilateral
from the non-trivial sphere will disappear
entirely, and the final triangulation (which might be disconnected)
will be closed and have strictly fewer tetrahedra than the original.
The crushing process might introduce topological changes, but these
are limited to pulling apart connected sums, adding new 3-sphere components,
and deleting $S^3$, $\R P^3$, $S^2 \times S^1$ and/or $L_{3,1}$ components.
The crushing process is simple to implement (in stark contrast to the
messy procedure of \emph{cutting} along a normal surface),
and with help from first homology groups any topological changes
are easy to detect.

In order to test for 0-efficiency (and to explicitly identify a non-trivial
normal sphere if one exists), {\regina} uses the following result:

\begin{lemma}
    If a closed orientable triangulation $\tri$ contains a
    non-vertex-linking normal sphere, then it contains one as a
    quadrilateral vertex normal surface.
\end{lemma}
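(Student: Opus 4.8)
The plan is to pass to quadrilateral coordinates, decompose the sphere over the quadrilateral vertex surfaces, and then recover Euler characteristics by making a controlled detour back into standard coordinates. First I would fix a non-vertex-linking normal sphere $S$; since $S$ is connected and not vertex-linking, its quadrilateral vector $\mathbf{q}=\mathbf{q}(S)$ is nonzero and satisfies the quadrilateral constraints. Let $F$ be the minimal face of the quadrilateral cone $\{\mathbf{x}\in\R^{3n}\,|\,B\mathbf{x}=0,\ \mathbf{x}\ge 0\}$ whose relative interior contains $\mathbf{q}$. The extreme rays of $F$ are quadrilateral vertex surfaces $V_1,\dots,V_m$, and writing $\mathbf{q}=\sum_i\lambda_i\,\mathbf{q}(V_i)$ with every $\lambda_i>0$ expresses $S$ as a positive combination of them. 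Because each $\mathbf{q}(V_i)$ has support contained in the support of $\mathbf{q}$, it inherits the quadrilateral constraints, so each $V_i$ is a genuine normal surface; and since a disconnection or a vertex-linking summand of $V_i$ would split its extreme ray, each $V_i$ is connected and non-vertex-linking.

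The crux is controlling $\chi$, which (as noted in Section~\ref{s-normal-prelim}) is \emph{not} linear in quadrilateral coordinates. To get around this I would lift to standard coordinates, writing $\bar S$ and $\bar V_i$ for the standard vectors of the link-free representatives. The key observation is that the link-free representative of a given quadrilateral vector is \emph{coordinatewise minimal} among all non-negative standard solutions with that quadrilateral vector: the vertex-linking surfaces $L_1,\dots,L_v$ occupy pairwise disjoint triangle coordinates (one group of corners per vertex), and being link-free means that within each such group the minimum triangle count is already $0$, so no link can be subtracted. Consequently every non-negative standard solution with quadrilateral vector $\mathbf{q}$ exceeds $\bar S$ by a \emph{non-negative} combination of the $L_j$. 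Applying this to $\sum_i\lambda_i\bar V_i$, which also has quadrilateral vector $\mathbf{q}$, gives $\sum_i\lambda_i\bar V_i=\bar S+\sum_j c_j\,\bar L_j$ with all $c_j\ge 0$. Since $\chi$ \emph{is} linear in standard coordinates, since $\chi(S)=2$, and since every vertex link is a $2$-sphere with $\chi(L_j)=2$ (the triangulation is closed), this yields
\[
\sum_i \lambda_i\,\chi(V_i)\;=\;2+2\sum_j c_j\;\ge\;2 .
\]

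Each $V_i$ is a connected closed surface in the closed orientable manifold, so $\chi(V_i)\le 2$ with equality exactly for a $2$-sphere. The displayed inequality therefore forces some $V_i$ to have $\chi(V_i)>0$, i.e.\ to be either a $2$-sphere or a projective plane. If it is a sphere we are finished: it is a quadrilateral vertex surface with nonzero quadrilateral vector, hence a non-vertex-linking normal sphere, as required. The one remaining case---and the part I expect to be the real obstacle---is a vertex surface that is a one-sided $\R P^2$ of Euler characteristic $1$, for here the \emph{primitive} vector on its ray reconstructs to the projective plane rather than to a sphere, so the sphere it ``wants'' to produce (the boundary $\partial N(V_i)$ of a twisted $I$-bundle neighbourhood, with quadrilateral vector $2\,\mathbf{q}(V_i)$) is not itself a vertex surface. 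Ruling this out is exactly where orientability of $\mfd$ must be used, together with the doubling relationship $[\partial N(V_i)]=2[V_i]$; this is precisely the $\R P^3$-type phenomenon flagged earlier in the paper, and I would dispatch it by a minimal-weight argument on $S$ showing that the presence of a genuine two-sided non-vertex-linking sphere forces a \emph{sphere} among the $V_i$ after all.
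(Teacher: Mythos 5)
Everything up to your final paragraph is sound, and is in essence an expanded version of the paper's own argument: the paper likewise writes the sphere in \emph{standard} coordinates as a positive rational combination of quadrilateral vertex surfaces minus non-negative multiples of vertex-linking spheres, and uses linearity of $\chi$ in standard coordinates to force some $V_i$ with $\chi(V_i)>0$ (your coordinatewise-minimality justification for $c_j\ge 0$ is a correct filling-in of a step the paper leaves implicit). The genuine gap is the $\R P^2$ case, which you explicitly leave unresolved---and the plan you sketch for it cannot succeed. A ``minimal-weight argument on $S$ forcing a sphere among the $V_i$'' is doomed: in the paper's own two-tetrahedron triangulation of $\R P^3$, the quadrilateral vertex rays of positive Euler characteristic carry only the two one-sided projective planes (the {\python} session in Section~\ref{s-exp} exhibits exactly these, with $\chi=1$), the quadrilateral constraints prevent mixing the two rays, and consequently \emph{every} non-vertex-linking normal sphere in that triangulation has quadrilateral vector $2k\,\mathbf{q}(V)$ for a vertex $\R P^2$ $V$. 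Its minimal-face decomposition is supported on that single ray, so no choice of $S$, minimal weight or otherwise, produces a genuine sphere among the primitive $V_i$.

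The resolution is definitional, and it is precisely how the paper's one-line proof closes (``some rational multiple of $Q$ must be a non-trivial normal sphere also''). Section~\ref{s-normal-prelim} defines a vertex surface as one whose \emph{vector lies on an extreme ray} of the cone; primitivity is not required. Hence the double $\partial N(V_i)$, with vector $2\,\mathbf{q}(V_i)$ on the same extreme ray, \emph{is} a quadrilateral vertex normal surface. Since $\mfd$ is orientable, the embedded $\R P^2$ is one-sided, so this double is the orientation double cover of $\R P^2$, i.e.\ a $2$-sphere, and it is non-vertex-linking because its quadrilateral vector is nonzero. Inserting this one observation in place of your final paragraph completes your proof; as written, the lemma remains unproven exactly in the case your own example of $\R P^3$-type behaviour flags.
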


\begin{proof}
This result is widely known, but (to the author's best knowledge) does not
appear in the literature, and so we outline the simple proof here.
When we convert to vectors in \emph{standard} coordinates, any connected
non-vertex-linking normal surface $F$
can be expressed as a positive rational combination
of one or more \emph{quadrilateral} vertex normal surfaces,
minus zero or more vertex linking spheres.  Since the Euler characteristic
$\chi$ is linear in standard coordinates and $\chi(S^2) > 0$,
it follows that if $F$ is a non-trivial normal sphere then
some quadrilateral vertex normal surface $Q$ must have $\chi(Q) > 0$,
whereupon some rational multiple of $Q$ must be a non-trivial normal sphere
also.
\end{proof}

At present, {\regina} tests for 0-efficiency by enumerating
\emph{all} quadrilateral vertex normal surfaces (up to
multiples) and testing each.
This of course is more work than we need to do,
since we only need to locate \emph{one} non-trivial sphere.
We outline some tempting alternatives now, and explain why {\regina} does
not use them.

The first alternative is that,
in standard coordinates, we could restrict our polyhedral cone by
adding the homogeneous linear constraint $\chi \geq 0$.
This has been tried in {\regina}, but yields a substantially
\emph{slower} algorithm.  Experimentation suggests that this is because
(i)~we are forced to work in the higher-dimensional
$\R^{7n}$ instead of $\R^{3n}$; and
(ii)~the constraint $\chi \geq 0$ slices through the cone in a way
that creates significantly more extreme rays, exacerbating the
combinatorial explosion in the double description method.

The second alternative is based on linear programming.
Casson and Jaco et~al.\ \cite{jaco02-algorithms-essential}
have suggested (in essence) that, for each of the $3^n$ choices of
which quadrilateral coordinate we allow to be non-zero in each tetrahedron,
we could solve a linear program (in polynomial time) to
maximise $\chi$ over a corresponding sub-cone in $\R^{7n}$.
This is a promising approach, but it has a significant problem:
for ``good'' triangulations, which typically \emph{are} 0-efficient,
we must attempt all $3^n$ linear programs before we can terminate.
That is, $3^n$ becomes a \emph{lower bound} on the running time.

Although the best known theoretical time
complexity for a full vertex enumeration is slower than this
\cite{burton13-tree}, in \emph{practice} a full enumeration
is typically much faster.
For example, when enumerating quadrilateral vertex
normal surfaces for the first 1000 triangulations in the
Hodgson-Weeks closed hyperbolic census \cite{hodgson94-closedhypcensus}
(a good source of ``difficult'' manifolds for normal surface enumeration),
the optimised double description method gives a running time that
grows roughly like $1.6^n$, as shown in Figure~\ref{fig-dd-hyp}.

\begin{figure}
    \centering
    \includegraphics[scale=0.62]{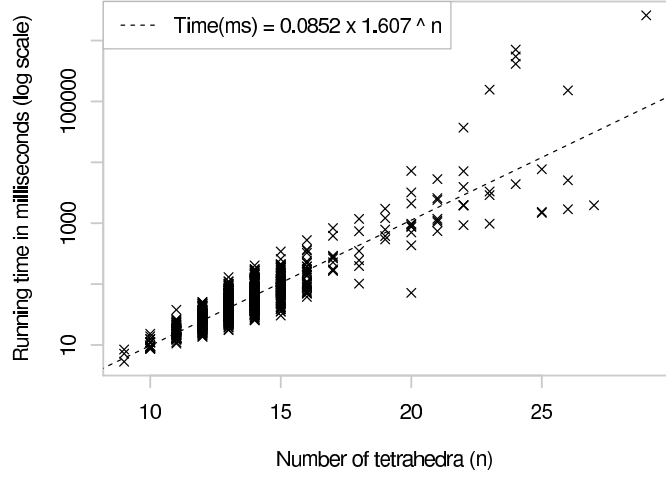}
    \caption{Growth of the running time for the double description method}
    \label{fig-dd-hyp}
\end{figure}
 
\subsection{High-level algorithms} \label{s-normal-sphere}

Here we present {\regina}'s current implementations of the high-level
3-sphere recognition, 3-ball recognition and connected sum decomposition
algorithms.  As noted earlier, the key ideas are already known; the
purpose of this description is to give a useful reference for these
algorithms in a modern ``ready to implement'' form.

It should be noted that all three algorithms guarantee both
correctness and termination (i.e., they are not probabilistic in nature).
The 3-sphere recognition and connected sum decomposition algorithms include
developments from many authors
\cite{burton10-quadoct,jaco89-surgery,jaco03-0-efficiency,jaco95-algorithms-decomposition,kneser29-normal,rubinstein95-3sphere,thompson94-thinposition},
and the 3-ball recognition algorithm is a trivial modification of 3-sphere
recognition.
See \cite{matveev03-algms} for a related but non-equivalent
variant of 3-sphere recognition based on special spines.

\begin{algorithm}[3-sphere recognition] \label{a-sphere}
    The following algorithm tests whether a given triangulation $\tri$
    is a triangulation of the 3-sphere.

    \begin{enumerate}
        \item Test whether $\tri$ is closed, connected and orientable.
        If $\tri$ fails any of these tests, terminate and return {\false}.
        \item Simplify $\tri$ using Algorithm~\ref{a-simp}.
        \item Test whether $\tri$ has trivial homology.
        If not, terminate and return {\false}.%
        \label{en-sphere-h1}
        \item Create a list $\mathcal{L}$ of triangulations to
        process, initially containing just $\tri$. \\
        While $\mathcal{L}$ is non-empty:
        \begin{itemize}
            \item Let $\mathcal{N}$ be the next triangulation in
            the list $\mathcal{L}$.  Remove $\mathcal{N}$ from
            $\mathcal{L}$, and test whether $\mathcal{N}$
            has a quadrilateral vertex normal sphere $F$.
            \begin{itemize}
                \item
                If so, then perform the Jaco-Rubinstein
                crushing procedure on $F$.
                For each connected component $\mathcal{N}'$
                of the resulting triangulation, simplify $\mathcal{N}'$
                and add it back into the list $\mathcal{L}$.
                \item
                If not, and if $\mathcal{N}$ has only one vertex,
                then search for a quad\-ri\-la\-teral-octagon vertex
                almost normal sphere in $\mathcal{N}$.
                If none exists then terminate and return \false.
            \end{itemize}
        \end{itemize}
        \item
        Once there are no more triangulations in $\mathcal{L}$,
        terminate and return {\true}.
    \end{enumerate}
\end{algorithm}

The key invariant in the algorithm above is that
the original 3-manifold is always the connected sum of all manifolds in
$\mathcal{L}$.
The homology test in step~(\ref{en-sphere-h1}) is crucial, since
the Jaco-Rubinstein crushing procedure could silently delete $S^2 \times S^1$,
$\R P^3$ and/or $L_{3,1}$ components.

\begin{algorithm}[3-ball recognition] \label{a-ball}
    The following algorithm tests whether a given triangulation $\tri$
    is a triangulation of the 3-ball.
    \begin{enumerate}
        \item Test whether $\tri$ is connected, orientable,
        has precisely one boundary component, and this
        boundary component is a 2-sphere.
        If $\tri$ fails any of these tests, terminate and return {\false}.
        \item Simplify $\tri$ using Algorithm~\ref{a-simp}.
        \item Cone the boundary of $\tri$ to a point by
        attaching one new tetrahedron to each boundary face,
        and simplify again.
        \item
        Run 3-sphere recognition over the final triangulation, and
        return the result.
    \end{enumerate}
\end{algorithm}

For our final algorithm, we note that by ``connected sum decomposition''
we mean a decomposition into \emph{non-trivial} prime summands
(i.e., no unwanted $S^3$ terms).

\begin{algorithm}[Connected sum decomposition] \label{a-connsum}
    The following algorithm computes the connected sum decomposition of
    the manifold described by a given triangulation $\tri$.
    We assume as a precondition that $\tri$ is closed, connected
    and orientable.
    \begin{enumerate}
        \item Simplify $\tri$ using Algorithm~\ref{a-simp}.
        \item Compute the first homology of $\tri$, and let
        $r$, $t_2$ and $t_3$ denote the rank,
        $\Z_2$~rank and $\Z_3$~rank respectively.
        \item Create an input list $\mathcal{L}$ of triangulations to
        process, initially containing just $\tri$, and an output list
        $\mathcal{O}$ of prime summands, initially empty. \\
        While $\mathcal{L}$ is non-empty:
        \begin{itemize}
            \item Let $\mathcal{N}$ be the next triangulation in
            the list $\mathcal{L}$.  Remove $\mathcal{N}$ from
            $\mathcal{L}$, and test whether $\mathcal{N}$
            has a quadrilateral vertex normal sphere $F$.
            \begin{itemize}
                \item
                If so, then perform the Jaco-Rubinstein
                crushing procedure on $F$.
                For each connected component $\mathcal{N}'$
                of the resulting triangulation, simplify $\mathcal{N}'$
                and add it back into the list $\mathcal{L}$.
                \item
                If not, then
                append $\mathcal{N}$ to the output list $\mathcal{O}$
                if either (i)~$\mathcal{N}$ has non-trivial homology,
                or (ii)~$\mathcal{N}$ has only one vertex
                and no quad\-ri\-la\-teral-octagon vertex
                almost normal sphere.
            \end{itemize}
        \end{itemize}
        \item Compute the first homology of each triangulation in
        the output list $\mathcal{O}$, sum the ranks,
        $\Z_2$~ranks and $\Z_3$~ranks, and append additional
        copies of $S^2 \times S^1$, $\R P^3$ and $L_{3,1}$
        to $\mathcal{O}$ so that these ranks sum to
        $r$, $t_2$ and $t_3$ respectively.
    \end{enumerate}
    On termination, the output list $\mathcal{O}$ will contain
    triangulations of the (non-trivial) prime summands of the input manifold.
\end{algorithm}

The key invariants of this algorithm are that (i)~the input manifold is always
the connected sum of all manifolds in $\mathcal{L}$ and $\mathcal{O}$,
plus zero or more $S^2 \times S^1$, $\R P^3$ and/or $L_{3,1}$ summands;
and that
(ii)~every output manifold in $\mathcal{O}$ is prime and not $S^3$.

\subsection{Tree traversal algorithms} \label{s-normal-tree}

There have been recent interesting developments in computational normal
surface theory that could allow us to move away from the double
description method entirely.  These are algorithms based on traversing a
search tree \cite{burton13-tree,burton12-unknot},
and they combine aspects of
linear programming, polytope theory and data structures.

The resulting algorithms avoid the dreaded combinatorial explosion of
the double description method; moreover, they offer incremental
output and are well-suited to parallelisation, progress tracking and
early termination.
Most importantly, experimentation suggests that they are significantly
faster and less memory-hungry---even when run in serial---for larger
and more difficult problems.
The code is already up and running, and will be included in the next
release of {\regina}.

Such tree traversal algorithms can be used for either a full enumeration
of vertex normal surfaces \cite{burton13-tree}, or to locate a single
non-trivial normal or almost normal sphere
(for 0-efficiency testing and/or 3-sphere recognition)
\cite{burton12-unknot}.
The key idea is to build a search tree according to which quadrilateral
coordinates are non-zero in each tetrahedron, and to run incremental
linear programs that enforce the quadrilateral constraints for those
tetrahedra where decisions have been made, but \emph{ignore} the
quadrilateral constraints for those tetrahedra that we have not yet
processed.

For the full enumeration of vertex normal surfaces, details of the
tree traversal algorithm can be found in \cite{burton13-tree}.
To illustrate, we return again to the 23-tetrahedron triangulation
of the Weber-Seifert dodecahedral space: whereas the trie-based double
description method enumerates all 698 quadrilateral vertex normal
surfaces in 62 minutes, the tree traversal algorithm does
this in just 32~minutes.

For locating just a single normal or almost normal sphere,
the tree traversal algorithm becomes extremely powerful:
it can prove that this same triangulation
of the Weber-Seifert dodecahedral space is 0-efficient in
\emph{under 10~seconds}.  Note that there is no early termination
here---the tree traversal algorithm conclusively proves in under 10~seconds
that no non-trivial normal sphere exists.

This latter algorithm for locating normal and almost normal spheres
relies on a number of crucial heuristics, and full details can be found
in \cite{burton12-unknot}.
Perhaps most interesting is the following experimental observation:
for ``typical'' inputs, this algorithm appears to require only a
\emph{linear} number of linear programs; that is, the typical behaviour
appears to be \emph{polynomial-time}.
One should be quick to note that this is in experimentation only,
and that the algorithm is not polynomial-time
in the worst case.  Nevertheless,
this is a very exciting computational development.


\section{Combinatorial recognition} \label{s-rec}

In this brief section we outline {\regina}'s combinatorial recognition code.
Despite significant
advances in 3-manifold algorithms, we as a community are still a
long way from being able to implement the full homeomorphism
algorithm---even simpler problems such as JSJ decomposition have
never been implemented, and Hakenness testing (which plays a key role in
the homeomorphism problem) has only recently become practical
\cite{burton12-ws}.
These are the issues that we aim to address (or rather work around) here.

In addition to slower but always-correct and always-conclusive
algorithms such as 3-sphere recognition and connected sum decomposition,
{\regina} offers a secondary means
for identifying 3-manifolds:
\emph{combinatorial recognition}.  The central idea is that
we ``hard-code'' a large number of general constructions
for infinite families of 3-manifolds (such as Seifert fibred spaces,
surface bundles and graph manifolds).
Then, given an input triangulation $\tri$, we test whether
$\tri$ follows one of these hard-coded constructions, and if it does,
we ``read off'' the parameters to name the underlying 3-manifold.

The advantages of this technique are:
\begin{itemize}
    \item It is extremely fast---all of
    {\regina}'s hard-coded constructions
    of infinite families can be recognised in small polynomial time.
    \item It allows {\regina} to recognise a much
    larger range of 3-manifolds than would otherwise be practically
    possible.
\end{itemize}

There are, of course, clear disadvantages:
\begin{itemize}
    \item Such techniques require a \emph{lot} of code if we wish to
    recognise each construction in its full generality:
    {\regina} currently has over $25\,000$ lines of source code
    devoted to combinatorial recognition alone.
    \item The code is only as powerful and general as the constructions
    that are implemented.  For instance, with a handful of exceptions,
    {\regina}'s recognition routines do not include any hyperbolic manifolds.
    \item To be recognised, a triangulation must
    be \emph{well-structured}---an arbitrary triangulation of
    even a simple manifold such as a lens space
    will not be recognised unless it follows one of the known constructions.
\end{itemize}

Despite these drawbacks, combinatorial recognition is enormously
useful in practice.  {\regina}'s recognition code is particularly strong
for non-orientable manifolds: of the 366 manifolds in the
$\leq 11$-tetrahedron closed non-orientable census \cite{burton11-genus},
{\regina} is able to recognise \emph{all} minimal triangulations for
334 of them, and is able to recognise
at least \emph{one} minimal triangulation
for 357---that is, all but nine.

If {\regina} cannot recognise the manifold from an input
triangulation $\tri$ (i.e., the combinatorial recognition is inconclusive),
then often a good strategy is to modify $\tri$ so that the triangulation
becomes more ``well-structured''.  This might include (i)~simplifying
$\tri$ using Algorithm~\ref{a-simp}, or (ii)~the slower but stronger technique
of performing a breadth-first from $\tri$ through Pachner graph until we
reach a triangulation that \emph{can} be recognised, following the discussion
in Section~\ref{s-simp-bfs}.

{\regina} is not the only software package to
employ combinatorial recognition: there is also the {\recogniser}
by Matveev et~al.,
which has extremely powerful recognition heuristics that can recognise
a much wider range of 3-manifolds than {\regina} can.
See \cite{matveev03-algms,matveev12-owr,recogniser} for details.


\section{Angle structures}

In addition to normal surfaces, {\regina} can also enumerate and analyse
\emph{angle structures} on a triangulation $\tri$.
An angle structure assigns non-negative internal dihedral angles
to each edge of each tetrahedron of $\mathcal{T}$, so that
(i)~opposite edges of a tetrahedron are assigned the same angle;
(ii)~all angles in a tetrahedron sum to $2\pi$; and
(iii)~all angles around any internal edge of $\mathcal{T}$
likewise sum to $2\pi$ (see Figure~\ref{fig-angles}).
Such structures are often called
\emph{semi-angle structures} \cite{kang05-taut2}, to distinguish them from
\emph{strict angle structures} in which all angles are strictly positive.
Note that, by a simple Euler characteristic computation,
an angle structure can only exist if $\tri$ is an ideal triangulation
with every vertex link a torus or Klein bottle.

\begin{figure}
    \centering
    \includegraphics{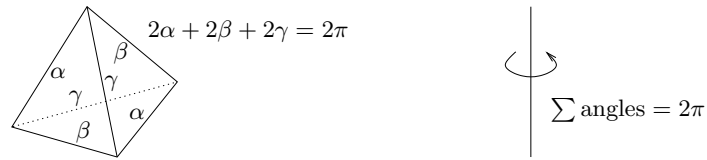}
    \caption{The conditions for an angle structure on a triangulation}
    \label{fig-angles}
\end{figure}

Angle structures were introduced by Rivin
\cite{rivin94-structures,rivin03-combopt} and Casson,
with further development by Lackenby \cite{lackenby00-anglestruct},
and are a simpler (but weaker) combinatorial analogue
of a complete hyperbolic structure.
Some angle structures are of particular interest:
these include \emph{taut angle structures}\footnote{%
    We follow the nomenclature of Hodgson et~al.\
    \cite{hodgson11-veering}---these are slightly more general
    than the original taut structures of Lackenby \cite{lackenby00-taut},
    who also adds a coorientation constraint.}
in which
every angle is precisely $0$ or $\pi$ (representing ``flattened'' tetrahedra)
\cite{hodgson11-veering,lackenby00-taut}, and
\emph{veering structures} which are taut angle structures with
powerful combinatorial constraints \cite{agol11-ideal,hodgson11-veering}.
All of these objects have an interesting role to play in
building a complete hyperbolic structure on
$\mathcal{T}$ \cite{futer11-angled,hodgson11-veering,kang05-taut2}.

Conditions (i)--(iii) above map out a polytope in $\R^{3n}$,
where $n$ is the number of tetrahedra; the vertices of this polytope
are called \emph{vertex angle structures}, and their convex combinations
generate all possible angle structures on $\mathcal{T}$.
For many years now, {\regina} has been able to enumerate all
vertex angle structures using the double
description method, as outlined in Section~\ref{s-normal-enum}.
Moreover, it can detect taut angle structures and (more recently) veering
structures when they are present.

A newer development is that {\regina} can enumerate \emph{only}
taut angle structures.  Detecting even a single taut angle structure
is NP-complete \cite{burton13-taut}; nevertheless, {\regina} can
enumerate all taut angle structures for relatively large
triangulations---in ad-hoc experiments it can do this for
70--80 tetrahedra in a matter of minutes.

The underlying algorithm is based on the following simple observation:

\begin{lemma} \label{l-taut-vertex}
    Every taut angle structure is also a vertex angle structure.
\end{lemma}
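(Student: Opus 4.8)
The plan is to use the standard characterization of a vertex of a polytope as an \emph{extreme point}: a point that cannot be written as a nontrivial convex combination of two distinct points of the polytope. I would take an arbitrary taut angle structure $T$, suppose that $T = \lambda A + (1-\lambda) B$ for angle structures $A, B$ with $0 < \lambda < 1$, and aim to conclude that $A = B = T$, so that $T$ is extreme and hence a vertex angle structure.

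First I would set up coordinates carefully. By condition~(i) each tetrahedron contributes three angle coordinates, so $T$, $A$ and $B$ all live in the polytope inside $\R^{3n}$ described in the text. In these coordinates condition~(ii) reads simply as ``the three angle coordinates within each tetrahedron sum to $\pi$'' (the $2\pi$ in the statement becomes $\pi$ once opposite edges are identified). Since $T$ is taut, within each tetrahedron exactly one of its three coordinates equals $\pi$ and the other two equal $0$.

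The key step is a nonnegativity argument applied to the vanishing coordinates of $T$. For any coordinate where $T$ is zero, the relation $0 = \lambda a + (1-\lambda) b$ with $a, b \geq 0$ and $0 < \lambda < 1$ forces $a = b = 0$. Hence, in every tetrahedron, the two coordinates that vanish in $T$ must also vanish in both $A$ and $B$. Condition~(ii) then pins down the last coordinate: since the three coordinates of that tetrahedron sum to $\pi$ and two of them are $0$, the remaining one must equal $\pi$ in both $A$ and $B$. Thus $A$ and $B$ agree with $T$ tetrahedron by tetrahedron, giving $A = B = T$ and establishing that $T$ is extreme.

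I do not expect a serious obstacle here, as the argument is short; the only point requiring a moment's care is to verify that nonnegativity together with condition~(ii) \emph{alone} forces the collapse, so that the internal-edge equations of condition~(iii) are never invoked in the vertex conclusion---they further restrict the polytope but do not affect extremality at a taut point. I would also note in passing that the same computation proves the slightly stronger statement that the only angle structure supported on the same coordinates as a given taut structure is that taut structure itself.
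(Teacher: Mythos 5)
Your proposal is correct and follows essentially the same route as the paper's proof: assume $T = \lambda A + (1-\lambda)B$ with $\lambda \in (0,1)$, use nonnegativity to force $A$ and $B$ to vanish wherever $T$ does, and conclude $A = B = T$. Your write-up merely spells out the final step (the angle-sum condition pinning the remaining coordinate to $\pi$) that the paper leaves implicit, and your side remarks---that condition~(iii) is never used and that the reduced coordinates sum to $\pi$---are both accurate.
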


\begin{proof}
    Describing angle structures by vectors in $\R^{3n}$ as outlined above,
    suppose that $\tau = \lambda \alpha_1 + (1-\lambda)\alpha_2$
    where $\lambda \in (0,1)$, and where $\tau,\alpha_1,\alpha_2$
    are angle structures with $\tau$ taut.
    Then both $\alpha_1$ and $\alpha_2$ must have dihedral angles of
    zero wherever $\tau$ has a dihedral angle of zero, whereupon
    it follows that $\alpha_1 = \alpha_2 = \tau$.
\end{proof}

\begin{algorithm}
    Given an $n$-tetrahedron triangulation $\tri$,
    the following algorithm enumerates all taut angle structures
    on $\tri$.

    First, we projectivise the polytope described by conditions (i)--(iii)
    above.  This is a standard construction:
    we add a $(3n+1)$th coordinate, embed the original polytope
    $\mathcal{P}$ in the hyperplane $x_{3n+1}=1$, and build the
    \emph{cone} from the origin through $\mathcal{P}$.
    This replaces our bounded polytope $\mathcal{P} \subseteq \R^{3n}$
    with a polyhedral cone $\mathcal{C} \subseteq \R^{3n+1}$ of the form
    $\mathcal{C} =
        \{\mathbf{x} \in \R^{3n+1}\,|\,A\mathbf{x}=0,\ \mathbf{x}\geq 0\}$.
    The vertex angle structures in the original polytope
    $\mathcal{P}$ now correspond to the extreme rays of the cone $\mathcal{C}$.

    We run the double description method to enumerate all extreme
    rays of $\mathcal{C}$.
    Recall that this inductively constructs cones
    $\mathcal{C}_0,\mathcal{C}_1,\ldots$, where $\mathcal{C}_i$ is
    obtained from $\mathcal{C}_{i-1}$ by intersecting with a new
    hyperplane $H_i$, and where each extreme ray of $\mathcal{C}_i$
    is either (a)~an extreme ray of $\mathcal{C}_{i-1}$ that lies on $H_i$,
    or (b)~the convex combination of two adjacent extreme rays
    $\mathbf{x}_1,\mathbf{x}_2$ of $\mathcal{C}_{i-1}$ that lie
    on opposite sides of $H_i$.

    Here we introduce a simple but crucial optimisation:
    in case~(b) above, we only consider pairs $\mathbf{x}_1,\mathbf{x}_2$
    that \emph{together} do not have positive values in more than one
    coordinate position per tetrahedron.
\end{algorithm}

This optimisation is similar to Letscher's filtering method for
normal surface enumeration \cite{burton10-dd}.  It works because,
if some pair $\mathbf{x}_1,\mathbf{x}_2$
fails the final condition above, then
(by virtue of the fact that all angles are non-negative and we always
perform convex combinations) any vertex angle structure that we
eventually obtain from a combination of $\mathbf{x}_1$ and $\mathbf{x}_2$
must have multiple non-zero coordinates in some tetrahedron,
and so cannot be taut.

Finally, we note that we can improve the enumeration algorithm further,
both for enumerating taut angle structures and
enumerating \emph{all} vertex angle structures,
by employing the same trie-based optimisation that
we introduce in Section~\ref{s-normal-enum}.


\section{Experimentation} \label{s-exp}

In this penultimate section we illustrate how {\regina} can be used for
both small-scale and large-scale experimentation, in the hope that
readers can use this as a template for beginning their own experiments.

In addition to its graphical user interface,
{\regina} offers a powerful scripting facility, in which most of the
{\cpp} classes and functions in its mathematical engine are made available
through a dedicated {\python} module.
{\python} is a popular scripting language that is easy to write and easy
to read, and the {\python} module in {\regina} makes it easy to quickly
prototype new algorithms, run tests over large bodies of census data,
or perform complex tasks that would be cumbersome through a
point-and-click interface.

Users can access {\regina}'s {\python} module in two ways:
\begin{itemize}
    \item by opening a {\python} console from within the graphical user
    interface, which allows users to study or modify data
    in the current working file;

    \item by starting the command-line program \texttt{regina-python},
    which brings up a stand\-alone {\python} prompt.
\end{itemize}

Users can also run their own {\python} scripts directly via
\texttt{regina-python}, embed scripts within data files as
\emph{script packets},
or write their own libraries of frequently-used routines that
will be loaded automatically each time a {\regina} {\python} session
starts.

The following sample {\python} session constructs the triangulation
of $\R P^3$ that was illustrated in Section~\ref{s-simp-tri},
prints its first homology group,
enumerates all vertex normal surfaces, and then locates and prints the
coordinates of the vectors that represent vertex normal projective planes.

{\small \begin{verbatim}
bab@rosemary:~$ regina-python 
Regina 4.93
Software for 3-manifold topology and normal surface theory
Copyright (c) 1999-2012, The Regina development team
>>> tri = NTriangulation()
>>> t0 = tri.newTetrahedron()
>>> t1 = tri.newTetrahedron()
>>> t0.joinTo(0, t1, NPerm4(1,0,3,2))       # Glues 0 (123) -> 1 (032)
>>> t0.joinTo(1, t1, NPerm4(1,0,3,2))       # Glues 0 (023) -> 1 (132)
>>> t0.joinTo(2, t1, NPerm4(0,1,2,3))       # Glues 0 (013) -> 1 (013)
>>> t0.joinTo(3, t1, NPerm4(0,1,2,3))       # Glues 0 (012) -> 1 (012)
>>> print tri.getHomologyH1()
Z_2
>>> s = NNormalSurfaceList.enumerate(tri, NNormalSurfaceList.STANDARD, 1)
>>> print s
5 vertex normal surfaces (Standard normal (tri-quad))
>>> for i in range(s.getNumberOfSurfaces()):
...     if s.getSurface(i).getEulerCharacteristic() == 1:
...         print s.getSurface(i)
... 
0 0 0 0 ; 0 1 0 || 0 0 0 0 ; 0 1 0
0 0 0 0 ; 0 0 1 || 0 0 0 0 ; 0 0 1
>>> 
\end{verbatim}}

One of {\regina}'s most useful facilities for experimentation is its
ability to create \emph{census data}: exhaustive lists of all 3-manifold
triangulations (up to combinatorial isomorphism) that satisfy some given
set of constraints.  The census algorithms are heavily optimised
\cite{burton04-facegraphs,burton07-nor10,burton11-genus},
and can be run in serial on a desktop or in parallel on a
large supercomputer.

The simplest way for users to create their own census data is through
the command-line \texttt{tricensus} tool.  The following example
constructs all 532 closed orientable 3-manifold triangulations with
$n=4$ tetrahedra:
{\small \begin{verbatim}
bab@rosemary:~$ tricensus --tetrahedra=4 --internal --orientable --finite
                --sigs output.txt
Starting census generation...
0:1 0:0 1:0 1:1 | 0:2 0:3 2:0 2:1 | 1:2 1:3 3:0 3:1 | 2:2 2:3 3:3 3:2
0:1 0:0 1:0 1:1 | 0:2 0:3 2:0 3:0 | 1:2 2:2 2:1 3:1 | 1:3 2:3 3:3 3:2
0:1 0:0 1:0 1:1 | 0:2 0:3 2:0 3:0 | 1:2 3:1 3:2 3:3 | 1:3 2:1 2:2 2:3
0:1 0:0 1:0 2:0 | 0:2 1:2 1:1 3:0 | 0:3 2:2 2:1 3:1 | 1:3 2:3 3:3 3:2
0:1 0:0 1:0 2:0 | 0:2 1:2 1:1 3:0 | 0:3 3:1 3:2 3:3 | 1:3 2:1 2:2 2:3
0:1 0:0 1:0 2:0 | 0:2 2:1 2:2 3:0 | 0:3 1:1 1:2 3:1 | 1:3 2:3 3:3 3:2
0:1 0:0 1:0 2:0 | 0:2 2:1 3:0 3:1 | 0:3 1:1 3:2 3:3 | 1:2 1:3 2:2 2:3
1:0 1:1 1:2 2:0 | 0:0 0:1 0:2 3:0 | 0:3 3:1 3:2 3:3 | 1:3 2:1 2:2 2:3
1:0 1:1 2:0 2:1 | 0:0 0:1 3:0 3:1 | 0:2 0:3 3:2 3:3 | 1:2 1:3 2:2 2:3
1:0 1:1 2:0 3:0 | 0:0 0:1 2:1 3:1 | 0:2 1:2 3:2 3:3 | 0:3 1:3 2:2 2:3
Finished.
Total triangulations: 532
\end{verbatim}}

The triangulations are converted to text-based isomorphism signatures
\cite{burton11-orprime} and written to the text file
\texttt{output.txt}, one per line.

We can now (as an illustration)
search through this census for all two-vertex 0-efficient
triangulations of the 3-sphere:

{\small \begin{verbatim}
bab@rosemary:~$ regina-python 
Regina 4.93
Software for 3-manifold topology and normal surface theory
Copyright (c) 1999-2012, The Regina development team
>>> f = open('output.txt', 'r')
>>> sig = f.readline()
>>> while sig:
...     sig = sig[0:-1]    # Strip off trailing newline
...     tri = NTriangulation.fromIsoSig(sig)
...     if tri.getNumberOfVertices() == 2:
...         if tri.isZeroEfficient() and tri.isThreeSphere():
...             print sig
...     sig = f.readline()
... 
eLAkaccddjgjqc
>>>
\end{verbatim}}

Here we see that, for $n=4$ tetrahedra,
there is one and only one such triangulation.
To study it in more detail, we can open up {\regina}'s
graphical user interface and create a new triangulation from the
isomorphism signature \texttt{eLAkaccddjgjqc}.

These small examples illustrate how, using the census facility and {\python}
scripting combined, {\regina} can be an invaluable tool for
experimentation, testing conjectures, and searching for pathological
examples.


\section{The future of Regina} \label{s-future}

{\regina} continues to enjoy active development and regular releases.
The developers are currently working towards a major version 5.0 release,
which will also work with triangulated \emph{4-manifolds}
and normal hypersurfaces (in joint work with Ryan Budney).
Other coming developments include richer operations on triangulated
2-manifolds, visualisation of vertex links in 3-manifold triangulations
(by Budney and Samuel Churchill), and much more sophisticated algebraic
machinery in 2, 3 and 4 dimensions (by Budney).
Much of this code is already running and well-tested.

Users are encouraged to contribute code and offer feedback.
For information on new releases, interested parties are welcome to
subscribe to the low-traffic mailing list
\texttt{regina-announce@lists.sourceforge.net}.


\bibliographystyle{amsplain}
\bibliography{pure}

\newcommand{\noopsort}[1]{}
\providecommand{\bysame}{\leavevmode\hbox to3em{\hrulefill}\thinspace}
\providecommand{\MR}{\relax\ifhmode\unskip\space\fi MR }
\providecommand{\MRhref}[2]{%
  \href{http://www.ams.org/mathscinet-getitem?mr=#1}{#2}
}
\providecommand{\href}[2]{#2}
\begin{thebibliography}{10}

\bibitem{agol11-ideal}
Ian Agol, \emph{Ideal triangulations of pseudo-{A}nosov mapping tori}, Topology
  and Geometry in Dimension Three, Contemp. Math., vol. 560, Amer. Math. Soc.,
  Providence, RI, 2011, pp.~1--17.

\bibitem{benedetti11-spheres}
Bruno Benedetti and G{\"u}nter~M. Ziegler, \emph{On locally constructible
  spheres and balls}, Acta Math. \textbf{206} (2011), no.~2, 205--243.

\bibitem{bruns10-normaliz}
Winfried Bruns and Bogdan Ichim, \emph{Normaliz: Algorithms for affine monoids
  and rational cones}, J. Algebra \textbf{324} (2010), no.~5, 1098--1113.

\bibitem{burton04-facegraphs}
Benjamin~A. Burton, \emph{Face pairing graphs and 3-manifold enumeration}, J.
  Knot Theory Ramifications \textbf{13} (2004), no.~8, 1057--1101.

\bibitem{burton04-regina}
\bysame, \emph{Introducing {R}egina, the 3-manifold topology software},
  Experiment. Math. \textbf{13} (2004), no.~3, 267--272.

\bibitem{burton07-nor10}
\bysame, \emph{Enumeration of non-orientable 3-manifolds using face-pairing
  graphs and union-find}, Discrete Comput. Geom. \textbf{38} (2007), no.~3,
  527--571.

\bibitem{burton09-convert}
\bysame, \emph{Converting between quadrilateral and standard solution sets in
  normal surface theory}, Algebr. Geom. Topol. \textbf{9} (2009), no.~4,
  2121--2174.

\bibitem{burton10-dd}
\bysame, \emph{Optimizing the double description method for normal surface
  enumeration}, Math. Comp. \textbf{79} (2010), no.~269, 453--484.

\bibitem{burton10-quadoct}
\bysame, \emph{Quadrilateral-octagon coordinates for almost normal surfaces},
  Experiment. Math. \textbf{19} (2010), no.~3, 285--315.

\bibitem{burton11-genus}
\bysame, \emph{Detecting genus in vertex links for the fast enumeration of
  3-manifold triangulations}, {ISSAC} 2011: Proceedings of the 36th
  International Symposium on Symbolic and Algebraic Computation, ACM, 2011,
  pp.~59--66.

\bibitem{burton11-hilbert}
\bysame, \emph{Fundamental normal surfaces and the enumeration of {H}ilbert
  bases}, Preprint, \texttt{arXiv:\allowbreak 1111.7055}, November 2011.

\bibitem{burton11-pachner}
\bysame, \emph{The {P}achner graph and the simplification of 3-sphere
  triangulations}, {SCG} '11: Proceedings of the Twenty-Seventh Annual
  Symposium on Computational Geometry, ACM, 2011, pp.~153--162.

\bibitem{burton11-orprime}
\bysame, \emph{Simplification paths in the {P}achner graphs of closed
  orientable 3-manifold triangulations}, Preprint, \texttt{arXiv:\allowbreak
  1110.6080}, October 2011.

\bibitem{regina}
Benjamin~A. Burton, Ryan Budney, William Pettersson, et~al., \emph{Regina:
  Software for 3-manifold topology and normal surface theory},
  \texttt{http://\allowbreak regina.\allowbreak sourceforge.\allowbreak net/},
  1999--2012.

\bibitem{burton12-crosscap}
Benjamin~A. Burton and Melih Ozlen, \emph{Computing the crosscap number of a
  knot using integer programming and normal surfaces}, ACM Trans. Math.
  Software \textbf{39} (2012), no.~1, 4:1--4:18.

\bibitem{burton12-unknot}
\bysame, \emph{A fast branching algorithm for unknot recognition with
  experimental polynomial-time behaviour}, Preprint, \texttt{arXiv:\allowbreak
  1211.1079}, November 2012.

\bibitem{burton13-tree}
\bysame, \emph{A tree traversal algorithm for decision problems in knot theory
  and 3-manifold topology}, Algorithmica \textbf{65} (2013), no.~4, 772--801.

\bibitem{burton12-ws}
Benjamin~A. Burton, J.~Hyam Rubinstein, and Stephan Tillmann, \emph{The
  {W}eber-{S}eifert dodecahedral space is non-{H}aken}, Trans. Amer. Math. Soc.
  \textbf{364} (2012), no.~2, 911--932.

\bibitem{burton13-taut}
Benjamin~A. Burton and Jonathan Spreer, \emph{The complexity of detecting taut
  angle structures on triangulations}, {SODA} '13: Proceedings of the
  Twenty-Fourth Annual {ACM-SIAM} Symposium on Discrete Algorithms, SIAM, 2013,
  pp.~168--183.

\bibitem{callahan99-cuspedcensus}
Patrick~J. Callahan, Martin~V. Hildebrand, and Jeffrey~R. Weeks, \emph{A census
  of cusped hyperbolic 3-manifolds}, Math. Comp. \textbf{68} (1999), no.~225,
  321--332.

\bibitem{chiodo11-groups}
Maurice Chiodo, \emph{Finding non-trivial elements and splittings in groups},
  J. Algebra \textbf{331} (2011), 271--284.

\bibitem{cormen01-algorithms}
Thomas~H. Cormen, Charles~E. Leiserson, Ronald~L. Rivest, and Clifford Stein,
  \emph{Introduction to algorithms}, 2nd ed., MIT Press, Cambridge, MA, 2001.

\bibitem{snappy}
Marc Culler, Nathan~M. Dunfield, and Jeffrey~R. Weeks, \emph{{SnapPy}, a
  computer program for studying the geometry and topology of 3-manifolds},
  \texttt{http://\allowbreak snappy.\allowbreak computop.\allowbreak org/},
  1991--2011.

\bibitem{fukuda96-doubledesc}
Komei Fukuda and Alain Prodon, \emph{Double description method revisited},
  Combinatorics and Computer Science (Brest, 1995), Lecture Notes in Comput.
  Sci., vol. 1120, Springer, Berlin, 1996, pp.~91--111.

\bibitem{futer11-angled}
David Futer and Fran{\c{c}}ois Gu{\'e}ritaud, \emph{From angled triangulations
  to hyperbolic structures}, Interactions Between Hyperbolic Geometry, Quantum
  Topology and Number Theory, Contemp. Math., vol. 541, Amer. Math. Soc.,
  Providence, RI, 2011, pp.~159--182.

\bibitem{haken61-knot}
Wolfgang Haken, \emph{Theorie der {N}ormalfl{\"a}chen}, Acta Math. \textbf{105}
  (1961), 245--375.

\bibitem{hass99-knotnp}
Joel Hass, Jeffrey~C. Lagarias, and Nicholas Pippenger, \emph{The computational
  complexity of knot and link problems}, J. Assoc. Comput. Mach. \textbf{46}
  (1999), no.~2, 185--211.

\bibitem{hodgson11-veering}
Craig~D. Hodgson, J.~Hyam Rubinstein, Henry Segerman, and Stephan Tillmann,
  \emph{Veering triangulations admit strict angle structures}, Geom. Topol.
  \textbf{15} (2011), no.~4, 2073--2089.

\bibitem{hodgson94-closedhypcensus}
Craig~D. Hodgson and Jeffrey~R. Weeks, \emph{Symmetries, isometries and length
  spectra of closed hyperbolic three-manifolds}, Experiment. Math. \textbf{3}
  (1994), no.~4, 261--274.

\bibitem{jaco02-algorithms-essential}
William Jaco, David Letscher, and J.~Hyam Rubinstein, \emph{Algorithms for
  essential surfaces in 3-manifolds}, Topology and Geometry: Commemorating
  SISTAG, Contemporary Mathematics, no. 314, Amer. Math. Soc., Providence, RI,
  2002, pp.~107--124.

\bibitem{jaco84-haken}
William Jaco and Ulrich Oertel, \emph{An algorithm to decide if a
  {$3$}-manifold is a {H}aken manifold}, Topology \textbf{23} (1984), no.~2,
  195--209.

\bibitem{jaco89-surgery}
William Jaco and J.~Hyam Rubinstein, \emph{{PL} equivariant surgery and
  invariant decompositions of 3-manifolds}, Adv. Math. \textbf{73} (1989),
  no.~2, 149--191.

\bibitem{jaco03-0-efficiency}
\bysame, \emph{0-efficient triangulations of 3-manifolds}, J. Differential
  Geom. \textbf{65} (2003), no.~1, 61--168.

\bibitem{jaco95-algorithms-decomposition}
William Jaco and Jeffrey~L. Tollefson, \emph{Algorithms for the complete
  decomposition of a closed {$3$}-manifold}, Illinois J. Math. \textbf{39}
  (1995), no.~3, 358--406.

\bibitem{kang05-taut2}
Ensil Kang and J.~Hyam Rubinstein, \emph{Ideal triangulations of 3-manifolds
  {II}; {T}aut and angle structures}, Algebr. Geom. Topol. \textbf{5} (2005),
  1505--1533.

\bibitem{kneser29-normal}
Hellmuth Kneser, \emph{Geschlossene {F}l{\"a}chen in dreidimensionalen
  {M}annigfaltigkeiten}, Jahresbericht der Deut. Math. Verein. \textbf{38}
  (1929), 248--260.

\bibitem{lackenby08-tunnel}
Marc Lackenby, \emph{An algorithm to determine the {H}eegaard genus of simple
  3-manifolds with nonempty boundary}, Algebr. Geom. Topol. \textbf{8} (2008),
  no.~2, 911--934.

\bibitem{lackenby00-anglestruct}
\bysame, \emph{Word hyperbolic {D}ehn surgery}, Invent. Math. \textbf{140}
  (\noopsort{2000a}2000), no.~2, 243--282.

\bibitem{lackenby00-taut}
\bysame, \emph{Taut ideal triangulations of 3-manifolds}, Geom. Topol.
  \textbf{4} (\noopsort{2000b}2000), 369--395.

\bibitem{markov60-insolubility}
A.~A. Markov, \emph{Insolubility of the problem of homeomorphy}, Proc.
  Internat. Congress Math. 1958, Cambridge Univ. Press, New York, 1960,
  pp.~300--306.

\bibitem{martelli01-or9}
Bruno Martelli and Carlo Petronio, \emph{Three-manifolds having complexity at
  most 9}, Experiment. Math. \textbf{10} (2001), no.~2, 207--236.

\bibitem{matveev87-transformations}
S.~V. Matveev, \emph{Transformations of special spines, and the {Z}eeman
  conjecture}, Izv. Akad. Nauk SSSR Ser. Mat. \textbf{51} (1987), no.~5,
  1104--1116, 1119.

\bibitem{matveev03-algms}
Sergei Matveev, \emph{Algorithmic topology and classification of 3-manifolds},
  Algorithms and Computation in Mathematics, no.~9, Springer, Berlin, 2003.

\bibitem{matveev12-owr}
\bysame, \emph{3-{M}anifold {R}ecognizer and 3-{M}anifold {A}tlas}, To appear
  in Oberwolfach Rep., 2012.

\bibitem{recogniser}
Sergei Matveev et~al., \emph{Manifold recognizer}, \texttt{http://\allowbreak
  www.\allowbreak matlas.\allowbreak math.\allowbreak csu.\allowbreak
  ru/\allowbreak ?page=\allowbreak recognizer}, accessed August 2012.

\bibitem{matveev90-complexity}
Sergei~V. Matveev, \emph{Complexity theory of three-dimensional manifolds},
  Acta Appl. Math. \textbf{19} (1990), no.~2, 101--130.

\bibitem{motzkin53-dd}
T.~S. Motzkin, H.~Raiffa, G.~L. Thompson, and R.~M. Thrall, \emph{The double
  description method}, Contributions to the Theory of Games, Vol. II (H.~W.
  Kuhn and A.~W. Tucker, eds.), Annals of Mathematics Studies, no.~28,
  Princeton University Press, Princeton, NJ, 1953, pp.~51--73.

\bibitem{qt}
{Nokia Corporation}, \emph{Qt -- cross-platform application and {UI}
  framework}, \texttt{http://\allowbreak qt.\allowbreak nokia.\allowbreak
  com/}, 1992--2012.

\bibitem{pachner91-moves}
Udo Pachner, \emph{P.{L}. homeomorphic manifolds are equivalent by elementary
  shellings}, European J. Combin. \textbf{12} (1991), no.~2, 129--145.

\bibitem{rivin94-structures}
Igor Rivin, \emph{Euclidean structures on simplicial surfaces and hyperbolic
  volume}, Ann. of Math. (2) \textbf{139} (1994), no.~3, 553--580.

\bibitem{rivin03-combopt}
\bysame, \emph{Combinatorial optimization in geometry}, Adv. in Appl. Math.
  \textbf{31} (2003), no.~1, 242--271.

\bibitem{rubinstein95-3sphere}
J.~Hyam Rubinstein, \emph{An algorithm to recognize the {$3$}-sphere},
  Proceedings of the International Congress of Mathematicians ({Z}{\"u}rich,
  1994), vol.~1, Birkh{\"a}user, 1995, pp.~601--611.

\bibitem{rubinstein04-smallsfs}
\bysame, \emph{An algorithm to recognise small {S}eifert fiber spaces}, Turkish
  J. Math. \textbf{28} (2004), no.~1, 75--87.

\bibitem{scott83-geometries}
Peter Scott, \emph{The geometries of 3-manifolds}, Bull. London Math. Soc.
  \textbf{15} (1983), no.~5, 401--487.

\bibitem{sedgewick92}
Robert Sedgewick, \emph{Algorithms in {C}++}, Addison-Wesley, Reading, MA,
  1992.

\bibitem{tarjan75-efficiency}
Robert~Endre Tarjan, \emph{Efficiency of a good but not linear set union
  algorithm}, J. ACM \textbf{22} (1975), no.~2, 215--225.

\bibitem{thompson94-thinposition}
Abigail Thompson, \emph{Thin position and the recognition problem for {$S\sp
  3$}}, Math. Res. Lett. \textbf{1} (1994), no.~5, 613--630.

\bibitem{thurston78-lectures}
William~P. Thurston, \emph{The geometry and topology of 3-manifolds}, Lecture
  notes, Princeton University, 1978.

\bibitem{tollefson98-quadspace}
Jeffrey~L. Tollefson, \emph{Normal surface {$Q$}-theory}, Pacific J. Math.
  \textbf{183} (1998), no.~2, 359--374.

\bibitem{snappea}
Jeffrey~R. Weeks, \emph{Snap{P}ea: {H}yperbolic 3-manifold software},
  \texttt{http://\allowbreak www.\allowbreak geometrygames.\allowbreak
  org/\allowbreak SnapPea/}, 1991--2000.

\end{thebibliography}

\end{document}